\DeclareMathOperator{\rank}{rank}
\newtheorem*{mainresult}{Main Result}
\newtheorem{proposition}{Proposition}
\newtheorem{lemma}{Lemma}
\theoremstyle{definition}
\newtheorem{remark}{Remark}
\newcommand\Bstrut{\rule[-1.8ex]{0pt}{0pt}}
\newcommand\Tstrut{\rule{0pt}{3.2ex}}
\newcommand{\tor}[1]{#1_{\text{tors}}}
\title[Constructing congruent number elliptic curves using 2-descent]{Constructing congruent number elliptic curves using 2-descent}
\author{Raiza Corpuz}
\date{}
\begin{document}

\begin{abstract}
A positive integer that is the area of some rational right triangle is called a 
congruent number. In an algebraic point of view, being a congruent number 
means satisfying a system of equations. As early as the 1800s, it is understood 
that if $n$ is a congruent number, then the equation $nm^2 = uv(u^2 - v^2)$ 
has a solution in $\mathbb{Z}$. Using the relation between congruent numbers 
and elliptic curves $E_n: y^2 = x^3 - n^2 x$ which was established in the 1900s, 
we will prove that the converse of this two century-old result holds as well. In 
addition to this, we present another proof of the converse using the method of 
2-descent. Towards the end of this paper, we demonstrate how one can use our 
proof to construct subfamilies of $E_n$ with rank at least 2 and 3.  
\end{abstract}
\maketitle

\section{Introduction}
\noindent A positive integer $n$ is said to be a \emph{congruent number} 
if there exists a right triangle with rational sides and area $n$. 
The search for congruent numbers is more commonly referred to as 
the \emph{congruent number problem}. This curiosity dates back to 
200\,AD, around the time of the Greek mathematician Diophantus. 
Meanwhile, an elliptic curve is an object that also has a rich history. 
It has been implicitly studied as Diophantine equations and a millenium 
passed before it was finally formalized.

An \emph{elliptic curve $E$ over a number field $K$} denoted $E/K$ is 
a nonsingular cubic projective curve with affine equation 
\[ Y^2 = X^3 + AX + B, \]
where $A, B \in K$. If we introduce the notion of adding points by the 
secant and tangent method, the set $E(K)$ of $K$-rational points of $E$, 
with a unique point at infinity $\mathcal{O}$ taken as the identity element, 
forms a group. In 
1922, Mordell proved that $E(\mathbb{Q})$ is a finitely generated 
abelian group. A few years later, Weil extended this result over 
arbitrary number fields giving us the Mordell-Weil 
theorem that we have presently: 
\[ E(K) \cong \tor{E(K)} \oplus \mathbb{Z}^{r}. \] 
The \emph{torsion subgroup} $\tor{E(K)}$ consists of the points 
of finite order in $E(K)$ and the number $r$ is called the 
\emph{free rank} of $E(K)$.

The 19th century was a promising time in the development of the 
congruent number problem. It was the time when the connection 
between congruent numbers and elliptic curves surfaced.  This 
relation is made more explicit by the following propositon 
(cf.\ \cite{koblitz}, Prop.\ 18, p.\ 46).

\begin{proposition}\label{sec1prop1}
A positive square-free integer $n$ is a congruent number if and only 
if the elliptic curve 
\[ E_{n}: y^2 = x \left( x^2 - n^2 \right) \]
has positive rank. If $n$ is a congruent number, we call $E_{n}$ the 
congruent number elliptic curve corresponding to $n$.
\end{proposition}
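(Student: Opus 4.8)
The plan is to set up an explicit dictionary between rational right triangles of area $n$ and rational points of infinite order on $E_n$, mediated by the classical observation that such a triangle is the same data as three rational squares in arithmetic progression with common difference $n$. Concretely, if $a,b,c$ are the legs and hypotenuse of a right triangle of area $n$, then $a^2+b^2=c^2$ and $ab=2n$ force $(c/2)^2-n=((a-b)/2)^2$ and $(c/2)^2+n=((a+b)/2)^2$; conversely, three rational squares $\alpha^2<\beta^2<\gamma^2$ with $\gamma^2-\beta^2=\beta^2-\alpha^2=n$ reassemble into the triangle with legs $\gamma-\alpha$, $\gamma+\alpha$ and hypotenuse $2\beta$. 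Each implication is then obtained by converting such a triple into a point on $E_n$ and back.

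For the forward implication, given a triangle I would set $X=(c/2)^2$ and read off from the two identities above that $X(X-n)(X+n)$ is the square of $c(a^2-b^2)/8$, so $P=\bigl(X,\,c(a^2-b^2)/8\bigr)\in E_n(\mathbb{Q})$. It then remains to see that $P$ has infinite order, and here I would invoke the standard determination $\tor{E_n(\mathbb{Q})}=\{\mathcal{O},(0,0),(n,0),(-n,0)\}\cong(\mathbb{Z}/2\mathbb{Z})^2$ (provable by injecting the torsion into $E_n(\mathbb{F}_p)$ for suitable small primes of good reduction, or cited from \cite{koblitz}). Since $X=(c/2)^2>0$ we have $X\notin\{0,-n\}$, while $X=n$ would force $a=b$ and hence the irrational hypotenuse $a\sqrt{2}$; thus $X\notin\{0,\pm n\}$, the point $P$ is none of the four torsion points, and $E_n$ has positive rank.

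For the converse, starting from a point $P=(x,y)\in E_n(\mathbb{Q})$ of infinite order (which exists once the rank is positive), I would apply the duplication formula on $y^2=x(x-n)(x+n)$. The identities one checks are, for $2P=(X,Y)$,
\[
X=\Bigl(\tfrac{x^2+n^2}{2y}\Bigr)^2,\qquad
X-n=\Bigl(\tfrac{x^2-2nx-n^2}{2y}\Bigr)^2,\qquad
X+n=\Bigl(\tfrac{x^2+2nx-n^2}{2y}\Bigr)^2,
\]
where $y\neq0$ because $P$, having infinite order, is not a $2$-torsion point. This exhibits $X-n,\,X,\,X+n$ as three rational squares in arithmetic progression with common difference $n$; since $2P$ also has infinite order, $X\notin\{0,\pm n\}$ and none of these squares vanishes, so the reconstruction above produces an honest nondegenerate rational right triangle of area exactly $n$.

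The routine but essential inputs are the two polynomial identities behind the duplication step and the torsion computation; neither is deep. The genuine thing to be careful about — and what I would regard as the main obstacle to a clean writeup — is bookkeeping the degeneracies on both sides: ensuring the point produced from a triangle avoids all of $\mathcal{O},(0,0),(\pm n,0)$, and symmetrically ensuring that the triangle produced from $2P$ has nonzero legs and a nonzero rational hypotenuse. All of these reduce to the single fact that infinite-order points, and their doubles, have $x\notin\{0,\pm n\}$ and $y\neq0$, so the whole argument is really driven by $\tor{E_n(\mathbb{Q})}$ being exactly $(\mathbb{Z}/2\mathbb{Z})^2$.
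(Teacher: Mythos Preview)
Your argument is correct. The paper does not give a self-contained proof right after the statement (it cites Koblitz), but in Section~4 it sketches one via an explicit bijection: triangles $(a,b,c)$ with $a^2+b^2=c^2$, $ab=2n$ correspond to points with $y\neq 0$ on $E_n$ through
\[
\varphi:(a,b,c)\mapsto\Bigl(\tfrac{nb}{c-a},\,\tfrac{2n^2}{c-a}\Bigr),\qquad
\psi:(x,y)\mapsto\Bigl(\tfrac{x^2-n^2}{y},\,\tfrac{2nx}{y},\,\tfrac{x^2+n^2}{y}\Bigr),
\]
together with the torsion computation $\tor{E_n(\mathbb{Q})}=\{\mathcal{O},(0,0),(\pm n,0)\}$. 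Your route is genuinely different: you pass through the three-squares-in-arithmetic-progression reformulation, and for the converse you invoke the duplication law to force $X$, $X-n$, $X+n$ to be rational squares before rebuilding the triangle. The paper's maps are literal inverses, so a single point of infinite order already yields a triangle without doubling; your approach costs that extra step but has the payoff of making transparent the classical fact that the image of $[2]$ on $E_n(\mathbb{Q})$ is exactly the locus where $x$, $x-n$, $x+n$ are all squares. Either way the crux is the same input you identified, namely that $\tor{E_n(\mathbb{Q})}\cong(\mathbb{Z}/2\mathbb{Z})^2$, which both arguments use to rule out degeneracies.
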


Proposition \ref{sec1prop1} offers a way to check whether a given 
positive number is a congruent number without having to draw rational 
right triangles. However, it is not a practical tool to use to obtain the  
set of all congruent numbers, as the rank of elliptic curves proved to be 
nontrivial to compute. But if one can find an expression for $n$ and show 
that $y^2 = x^3 - n^2 x$ has positive rank, then one can generate a subset 
of the set of all congruent numbers. For instance, Johnstone and Spearman
\cite{johnstonespearman1} considered 
$n = 6 \left( r^4 + 2r^2s^2 + 4s^4 \right) \left( r^4 + 8r^2s^2 + 4s^4 \right)$ 
and proved that the family of elliptic curves $E/\mathbb{Q}: y^2 = x^3 - n^2 x$ 
has rank at least 3. They achieved a rank more than 1 by choosing relatively 
prime integers $r$ and $s$  so that $t := \frac{r}{s}$ satisfies 
$w^2 = t^4 + 14t^2 + 4$. There are infinitely many such pairs $(r,s)$ 
because the quartic curve they considered is birationally equivalent to 
an elliptic curve with positive rank. 

Meanwhile, Bennett\cite{bennett} did not start with an expression for $n$ but 
instead proved that if $n$ is a positive integer so that the Diophantine equation
\[ x(x + 1)(x + 2) = ny^2 \]
has a solution, then the rank of $E_{n}$ is positive. He then used this 
to show that for any integer $m > 1$, there are infinitely many
congruent numbers in each congruence class modulo $m$.
Johnstone and Spearman\cite{johnstonespearman2} extended this 
result by considering $n = a^4 - b^4$, where 
\[ (a,b) = \left( \dfrac{3t^2 + 6t + 1}{2}, \dfrac{3t^2 + 2t + 1}{2} \right). \]
The corresponding family of congruent number elliptic curves 
has rank at least 2. Similar to \cite{bennett}, they proceeded to show that 
for any integer $m > 1$, each congruence class modulo $m$ contains infinitely 
congruent numbers for which the corresponding congruent number elliptic 
curves belong to their constructed family, so they have rank at least 2. 
In 2009, Dujella, Janfada, and Salami\cite{djs} developed an algorithm to 
search for congruent number elliptic curves of high rank based on Monsky's 
formula for computing the 2-Selmer rank (see \cite{silverman}, chapter X)
of congruent number elliptic curves. According to this study, the highest 
observed rank for congruent numbers is 7 which is due to Rogers\cite{rogers}.

More than a century ago, Roberts\cite{roberts} proved that if $n$ is a 
congruent number. then one can write
\begin{equation}\label{exp}
	nm^2 = uv(u^2 - v^2),
\end{equation}
so that $m, u, v \in \mathbb{Z}$. To illustrate, let us take 
$n = 6 \left( r^4 + 2r^2s^2 + 4s^4 \right) \left( r^4 + 8r^2s^2 + 4s^4 \right)$ 
in \cite{johnstonespearman1}. If we let $u = r^4 + 2r^2s^2 + 4s^4$, 
$s = 6r^2s^2$, and $m = 1$ we see that indeed, we can write $n$ as 
$nm^2 = uv\left( u^2 - v^2 \right)$. Another example is when $n = a^4 - b^4$ 
in \cite{johnstonespearman2}. Here, we can take $u = a^2$, $v = b^2$, and 
$m = ab$. We also see that the converse of Roberts' result may also hold for some 
cases. An example is \cite{bennett}, where we may take $u = x + 2$, $v = x$, and 
$m = 2$.

Proposition \ref{sec1prop1} is a witness to the connection between congruent 
numbers and elliptic curves while Roberts' result exposes the connection between 
congruent numbers and the equation \eqref{exp}. One objective of this paper is 
to show the relation between equation \eqref{exp} and congruent number elliptic 
curves. This relation is summarized below. 

\begin{mainresult}
Fix $n \in \mathbb{Z} \setminus\{0\}$. The elliptic curve $y^2 = x^3 - n^2 x$ 
has positive rank over $\mathbb{Q}$ if and only if we can write
\[ nm^2 = uv(u^2 - v^2), \]
for some integers $u$ and $v$.
\end{mainresult}

\noindent The second objective is to use the method of 2-descent to give a proof 
of the reverse direction and demonstrate how this method of proof serves as 
foundation for constructing congruent number elliptic curves of rank more than 1.

We give a brief overview of the paper. In the next section, we will discuss how the 
congruent number problem was perceived from the time of Diophantus up to the 
present. In particular, we will derive some sytems of equations that were used in 
an attempt to provide an answer to this millenium-old problem algebraically. In 
Section 3, we discuss the method of 2-descent, which we will then use in Section 
4, to prove the converse of our main result. In the succeeding section, we will 
construct families of congruent number elliptic curves with rank at least 2 and 3 
from the results laid out in Section 4.


\section{System of equations relevant to the congruent number problem}

\noindent As mentioned before, the congruent number problem dates back to 
the Greeks who asked which numbers occur as the area of some rational right 
triangle. Suppose $n$ is a congruent number. Algebraically speaking, this means 
that there is a triple $(X, Y, Z) \in \mathbb{Q}^3$ so that
\begin{equation}\label{eq1}
\begin{cases}
	\begin{aligned}
			X^2 + Y^2 &= Z^2\\
			XY &= 2n.
	\end{aligned}
\end{cases}
\end{equation}
From this system, one can prove that $n$ is a congruent number if and only if 
$\alpha^2 n$, $\alpha \in \mathbb{Z} \setminus \{0\}$ is a congruent 
number as well. If $n$ is a congruent number and $(x, y, z)$ is a solution to 
\eqref{eq1}, then $(\alpha x, \alpha y, \alpha z)$ is a solution when $n$ is 
replaced by $\alpha^2 n$. On the other hand if $\alpha^2 n$ is a congruent 
number and $(x', y', z')$ is a solution to \eqref{eq1}, then one can consider 
the triple $\left( \tfrac{x}{\alpha}, \tfrac{y}{\alpha}, \tfrac{z}{\alpha} \right)$ 
to show that $n$ is a congruent number. We formalize this result in the following 
proposition. 

\begin{proposition}\label{sec2prop1}
A positive integer $n$ is a congruent number if and only if $\alpha^2 n$, for 
some $\alpha \in \mathbb{Z} \setminus \{0\}$ is a congruent number.
\end{proposition}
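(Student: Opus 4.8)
The plan is to exploit the fact, already indicated in the discussion preceding the statement, that being a congruent number is equivalent to the solvability of the system \eqref{eq1} in rationals, together with the evident scaling symmetry of that system. First I would record the reduction: a positive integer $N$ is a congruent number if and only if \eqref{eq1}, with $n$ replaced by $N$, has a solution $(X,Y,Z) \in \mathbb{Q}^3$. One could insist the entries be positive so that $(X,Y,Z)$ are genuinely the sides of a right triangle, but since $XY = 2N > 0$ forces $X$ and $Y$ to share a sign while $Z$ enters only through $Z^2$, replacing $(X,Y,Z)$ by $(|X|,|Y|,|Z|)$ shows the positivity requirement is harmless; this makes the passage between ``congruent number'' and ``solvability of \eqref{eq1}'' completely free.

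For the forward implication, I would suppose $n$ is congruent and take a solution $(x,y,z)$ of \eqref{eq1}. A direct substitution gives $(\alpha x)^2 + (\alpha y)^2 = \alpha^2 z^2 = (\alpha z)^2$ and $(\alpha x)(\alpha y) = \alpha^2 \cdot 2n = 2(\alpha^2 n)$, so $(\alpha x, \alpha y, \alpha z)$ solves \eqref{eq1} with $n$ replaced by $\alpha^2 n$; since $\alpha \neq 0$ the resulting legs $|\alpha x|, |\alpha y|$ are nonzero, so $\alpha^2 n$ is a congruent number.

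For the reverse implication, I would suppose $\alpha^2 n$ is congruent with $(x',y',z')$ a solution of the corresponding system. Because $\alpha \in \mathbb{Z} \setminus \{0\}$, the triple $\bigl(\tfrac{x'}{\alpha}, \tfrac{y'}{\alpha}, \tfrac{z'}{\alpha}\bigr)$ is well-defined, and running the same computation backwards shows it solves \eqref{eq1} for $n$; hence $n$ is a congruent number. In fact the maps $(x,y,z) \mapsto (\alpha x, \alpha y, \alpha z)$ and $(x,y,z) \mapsto \bigl(\tfrac{x}{\alpha}, \tfrac{y}{\alpha}, \tfrac{z}{\alpha}\bigr)$ are mutually inverse bijections between the two solution sets, so the equivalence follows at once.

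There is essentially no serious obstacle here: the argument is a two-line scaling. The only points requiring (minor) care are the bookkeeping that a rational solution of \eqref{eq1} really does correspond to a rational right triangle of the stated area, and the use of $\alpha \neq 0$ to divide in the reverse direction and to keep the scaled triangle nondegenerate — both of which are dispatched by the sign/absolute-value remarks above.
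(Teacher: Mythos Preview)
Your proof is correct and follows essentially the same approach as the paper: both argue via the scaling symmetry of system \eqref{eq1}, sending a solution $(x,y,z)$ for $n$ to $(\alpha x,\alpha y,\alpha z)$ for $\alpha^2 n$ and conversely dividing by $\alpha$. Your version is slightly more careful about the sign bookkeeping, but the content is identical.
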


Therefore, in the study of congruent numbers, it is enough to look at their 
squarefree parts. And so for this section, we shall take $n$ to be a nonzero 
squarefree natural number. We take another look at system \eqref{eq1}. 
Notice that it can be rewritten as: 
\begin{equation}\label{eq2}
	(X \pm Y)^2 = Z^2 \pm 4n \iff \left( \dfrac{X \pm Y}{2} \right)^2 = \left( \dfrac{Z}{2} \right)^2 \pm n.
\end{equation}
Let $W$ be the least common denominator of $X$, $Y$, and $Z$. Multiplying 
both sides by $4W$, we get
\begin{equation}\label{eq3}
	 \left( X \pm Y \right)^2W^2 = (ZW)^2 \pm n(2W)^2.
\end{equation}

An Arab manuscript witten around the 10th century suggests that they were 
also studying the congruent number problem. Although they envisioned the 
problem in another way: they were particularly interested in the existence 
of three squares in arithmetic progression and common difference $n$. 
Algebraically, this question asks if there is a triple $(P, R, S)$ satisfying the 
following system of Diophantine equations.
\begin{equation}\label{eq4.0}
\begin{cases}
	\begin{aligned}
			R^2 - P^2 &= n\\
			P^2 - S^2 &= n
	\end{aligned}
\end{cases}
\end{equation}
It can easily be seen that if $(P, R, S)$ satisfies the system \eqref{eq4.0}, then 
$(\alpha P, \alpha R, \alpha S)$ satisfies the same equation when $n$ is 
replaced by $\alpha^2 n$, for some nonzero integer $\alpha$. One can see that 
the Diophantine equations \eqref{eq4.0} that describe the arithmetic progression 
of three squares problem can be generalized into the search for a 4-tuple 
$(P, Q, R, S) \in \mathbb{N}^4$ so that
\begin{equation}\label{eq4.1}
\begin{cases}
	\begin{aligned}
			P^2 + nQ^2 = R^2\\
			P^2 - nQ^2 = S^2.
	\end{aligned}
\end{cases}
\end{equation}
With $(P, Q, R, S) = \left( ZW, 2W, (X + Y)W, (X - Y)W \right) \in \mathbb{Z}^4$, 
we see that system \eqref{eq4.1} follows from \eqref{eq3}.
To prove the other way, subtract the equations in \eqref{eq4.1} 
appropriately to get
\[ n = \dfrac{1}{2Q^2}\left( R^2 - S^2 \right) = \dfrac{1}{2}\left( \dfrac{R + S}{Q} \right)\left( \dfrac{R - S}{Q} \right) \]
and then show that $\tfrac{R + S}{Q}, \tfrac{R - S}{Q} \in \mathbb{Q}$ 
are sides of a right triangle. Indeed, since they satisfy the 
Pythagorean formula
\[ \left( \dfrac{R + S}{Q} \right)^2 + \left( \dfrac{R - S}{Q} \right)^2 = \dfrac{2\left( R^2 + S^2 \right)}{Q^2} \stackrel{\eqref{eq4.1}}{=} \dfrac{2(2P^2)}{Q^2} = \left( \dfrac{2P}{Q} \right)^2 \]
they represent the base and the height of a rational right triangle. 
This shows that the Greeks (\eqref{eq1} $\iff$ \eqref{eq3}) and the Arabs 
\eqref{eq4.1} were indeed studying the same problem, although interpreted 
differently. An interesting observation is that as equivalent systems of equations, 
\eqref{eq1} and \eqref{eq4.1} can be freely extended to the case when $n$ is a 
negative integer. It loses its geometric meaning but it remains algebraically sound. 
We will discuss this further in Section 4. 

In 1879, while trying to answer a problem of 
Fibonacci, Roberts\cite{roberts} proved the following result. 

\begin{proposition}\label{sec2prop2}
Fix $n \in \mathbb{Z} \setminus\{0\}$. If the system of Diophantine equations 
\eqref{eq4.1} is solvable, then we can write 
\[ nm^2 = uv(u^2 - v^2). \]
for some integers $u$, $v$, and $m$.
\end{proposition}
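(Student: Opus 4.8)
The plan is to turn a solution of \eqref{eq4.1} into an ordinary integer Pythagorean triple whose area is a square multiple of $n$, and then to apply the classical parametrization of Pythagorean triples to that triple.

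First I would add and subtract the two equations of \eqref{eq4.1}, obtaining $R^2 + S^2 = 2P^2$ and $R^2 - S^2 = 2nQ^2$. Setting $a = R + S$, $b = R - S$, and $c = 2P$, a one-line computation gives
\[
a^2 + b^2 = 2\left(R^2 + S^2\right) = (2P)^2 = c^2,
\qquad
\tfrac12\,ab = \tfrac12\left(R^2 - S^2\right) = nQ^2 .
\]
So $(a,b,c)$ is an integer Pythagorean triple with $\tfrac12 ab = nQ^2$. If $Q = 0$ the conclusion is immediate (take $m = 0$ and $u = v = 1$), so assume $Q \neq 0$; then $n \neq 0$ forces $a$, $b$, $c$ to be nonzero, and replacing each by its absolute value keeps $a^2 + b^2 = c^2$ while changing the second identity to $\tfrac12 ab = \lvert n\rvert Q^2$ with $a,b,c > 0$.

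Next I would reduce $(a,b,c)$ to a primitive triple. Put $d = \gcd(a,b)$ and write $a = d a_0$, $b = d b_0$ with $\gcd(a_0,b_0) = 1$; from $c^2 = d^2\left(a_0^2 + b_0^2\right)$ and $d^2 \mid c^2$ we get $d \mid c$, say $c = d c_0$, so that $(a_0, b_0, c_0)$ is a primitive Pythagorean triple. By the classical classification --- after interchanging $a_0$ and $b_0$ if necessary so that $b_0$ is even --- there are coprime integers $u_0 > v_0 > 0$ of opposite parity with $a_0 = u_0^2 - v_0^2$, $b_0 = 2 u_0 v_0$, and $c_0 = u_0^2 + v_0^2$. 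Substituting into the area identity yields $\lvert n\rvert Q^2 = \tfrac12 d^2 a_0 b_0 = d^2 u_0 v_0\left(u_0^2 - v_0^2\right)$, and multiplying by $d^2$ and regrouping the factors gives
\[
\lvert n\rvert\,(dQ)^2 = (d u_0)(d v_0)\left((d u_0)^2 - (d v_0)^2\right).
\]
Hence $n m^2 = u v\left(u^2 - v^2\right)$ with $m = dQ$ and $(u,v) = (d u_0,\, d v_0)$ when $n > 0$; when $n < 0$ the same equation holds after swapping $u$ and $v$, which flips the sign of the right-hand side.

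The only non-elementary input here is the parametrization of primitive Pythagorean triples, which is standard. I expect the only genuinely delicate points to be (i) carrying out the rescaling correctly --- the factor $d^2 = \gcd(a,b)^2$ must be absorbed into the parameters $u$ and $v$, not into $m$ --- and (ii) the sign and degeneracy bookkeeping ($Q = 0$, $n < 0$), both dealt with as indicated above. The whole argument is, in effect, a streamlined version of the classical derivation of the parametrization $n w^2 = uv(u^2 - v^2)$ from a congruent-number triangle, read off from the Arab system \eqref{eq4.1} instead.
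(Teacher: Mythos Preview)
Your proof is correct and rests on the same key idea as the paper's --- reduce to an integer Pythagorean triple, apply the classical parametrization, and absorb the scaling factor into $u$ and $v$ by multiplying through by the appropriate square. The one difference is in how the Pythagorean triple is produced: the paper passes from \eqref{eq4.1} to the rational right-triangle system \eqref{eq1}, writes the rational sides as fractions, and clears denominators, whereas you stay inside $\mathbb{Z}$ the whole time via the substitution $a = R+S$, $b = R-S$, $c = 2P$. Your route is a genuine streamlining --- it avoids introducing six auxiliary integers $x_1,x_2,y_1,y_2,z_1,z_2$ and the associated bookkeeping --- while the paper's detour through \eqref{eq1} has the minor expository advantage of tying the argument visibly to the rational-triangle picture. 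The sign handling for $n<0$ and the degenerate case $Q=0$ are dealt with cleanly in your version; the paper leaves these implicit.
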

\begin{proof}
Our proof will utilize what we know about congruent numbers so far. Let $n$ 
be a congruent number. Using the equivalence of systems \eqref{eq1} and 
\eqref{eq4.1}, we can say that there is a triple $(x, y, z)$ of positive rational 
numbers satisfying \eqref{eq1}. In particular, write $x = \tfrac{x_1}{x_2}$, 
$y = \tfrac{y_1}{y_2}$, and $z = \tfrac{z_1}{z_2}$ as a ratio of positive 
integers (in lowest terms). By clearing denominators, \eqref{eq1} transforms 
into
\begin{equation}\label{eq5}
\begin{cases}
	\begin{aligned}
			(x_1 y_2 z_2)^2 + (x_2 y_1 z_2)^2 &= (x_2 y_2 z_1)^2\\
			x_1 y_1 &= 2n x_2 y_2.
	\end{aligned}
\end{cases}
\end{equation}
The first equation in the above system has a complete known parametrization: 
\begin{equation}
\begin{cases}
	\begin{aligned}
			x_1 y_2 z_2 &= k\left( r^2 - s^2 \right)\\
			x_2 y_1 z_2 &= 2krs\\
			x_2 y_2 z_1 &= k \left( r^2 + s^2 \right),
	\end{aligned}
\end{cases}
\end{equation}
with $u,v \in \mathbb{Z} \setminus \{0\}$ and 
$k = \gcd\left( x_1 y_2 z_2, x_2 y_1 z_2, x_2 y_2 z_1 \right)$. Rewrite the 
second equation in \eqref{eq5} using this parametrization to get: 
\[ x_1 y_1 = 2n x_2 y_2 \iff x_1 y_1 x_2 y_2 z_2^2 = 2n(x_2 y_2 z_2)^2 \iff k^2 rs\left( r^2 - s^2 \right) = n \left( x_2 y_2 z_2 \right)^2. \]
Multiply both sides by $k^2$ to get 
$k^4 rs \left( r^2 - s^2 \right) = n \left( kx_2 y_2 z_2 \right)^2$. Finally, 
letting $u = kr$, $v = ks$, and $m = kx_2 y_2 z_2$, we have 
$nm^2 = uv \left( u^2 - v^2 \right)$as desired.
\end{proof}

The most recent development for the congruent number problem came in 1983, 
when Tunnell\cite{tunnell} developed a criterion to check if a positive integer 
$n$ is a congruent number. It involves counting the solutions of several 
Diophantine equations. Let 
\begin{align*}
	A_n &= \#\left\{ (x,y,z) \in \mathbb{Z}^3: n = 2x^2 + y^2 + 32z^2 \right\}\\
	B_n &= \#\left\{ (x,y,z) \in \mathbb{Z}^3: n = 2x^2 + y^2 + 8z^2 \right\}\\
	C_n &= \#\left\{ (x,y,z) \in \mathbb{Z}^3: n = 8x^2 + 2y^2 + 64z^2 \right\}\\
	D_n &= \#\left\{ (x,y,z) \in \mathbb{Z}^3: n = 8x^2 + 2y^2 + 16z^2 \right\}.
\end{align*}
Tunnell's criterion says that for an odd congruent number $n$ we have $2A_n = B_n$. 
On the other hand, if $n$ is an even congruent number, then $2C_n = D_n$.


\section{The Method of 2-Descent}

\noindent As mentioned before, the computation of a rank of an arbitrary 
elliptic curve is no trivial task. In fact, the only method so far is a well-known 
conjecture known as the Birch and Swinnerton-Dyer (BSD) conjecture. In fact, 
if this conjecture turns out to be true, then Tunnell's criterion is a necessary 
and sufficient condition for a positive integer $n$ to be a congruent number. 
To learn more about this conjecture one can check Appendix C.16 of 
\cite{silverman}. 

Fortunately, for some elliptic curves, there is a method called 2-Descent that 
we can use. In what follows, we discuss the method of 2-Descent applied to 
elliptic curves with a rational point of order two, as in Section 3.4 of 
\cite{silvermantate}.

Let $A$ be a nonzero integer. Take the elliptic curve
\[ E/\mathbb{Q}: y^2 = x \left( x^2 + A \right) \]
and set $T = (0,0) \in E(\mathbb{Q})$. Consider another elliptic curve 
\[ \overline{E}/\mathbb{Q}: y^2 = x^3 - 4Ax, \]
and set $\overline{T} = (0,0) \in \overline{E}(\mathbb{Q})$. 
The existence of the following homomorphisms tells us that the elliptic curves 
$E$ and $\overline{E}$ are isogenous over $\mathbb{Q}$.
\begin{enumerate}[label = (\roman*)]
	\item{$\phi: E(\mathbb{Q}) \to \overline{E}(\mathbb{Q})$ defined as
	\[ \phi(P) = 
		\begin{cases}
			\left( \dfrac{y^2}{x^2}, \dfrac{y \left( x^2 - A \right)}{x^2} \right), &\text{ if } P = (x,y) \neq \mathcal{O}, T,\\
			\ \ \ \ \ \ \ \ \ \ \overline{\mathcal{O}}, &\text{ if } P = \mathcal{O} \text{ or } P = T,
		\end{cases} \]
	with $\ker(\phi) = \{ \mathcal{O}, T \}$.} 
	\item{$\psi: \overline{E}(\mathbb{Q}) \to E(\mathbb{Q})$ defined as 
	\[ \psi(P) = 
		\begin{cases}
			\left( \dfrac{\overline{y}^2}{4 \overline{x}^2}, \dfrac{\overline{y} \left( \overline{x}^2 + 4A \right)}{8 \overline{x}^2} \right), &\text{ if } \overline{P} = ( \overline{x}, \overline{y} ) \neq \overline{\mathcal{O}}, \overline{T},\\
			\ \ \ \ \ \ \ \ \ \ \mathcal{O}, &\text{ if } \overline{P} = \overline{\mathcal{O}} \text{ or } \overline{P} = \overline{T},
		\end{cases} \]
	with $\ker(\psi) = \{ \overline{\mathcal{O}}, \overline{T} \}$.}
\end{enumerate}
For any $P \in E(\mathbb{Q})$ and 
$\overline{P} \in \overline{E}(\mathbb{Q})$, the above homomorphisms 
satisfy $\left( \psi \circ \phi \right)(P) = 2P$ and 
$\left( \phi \circ \psi \right)\left( \overline{P} \right)= 2\overline{P}$. 
We introduce two more homomorphisms:
\begin{enumerate}[label = (\roman*)]
	\item{$\alpha: E(\mathbb{Q}) \to \mathbb{Q}^{\times}/(\mathbb{Q}^{\times})^{2}$ defined as
	$$ \alpha(P) = 
		\begin{cases}
			\begin{aligned}
				1 \left( \bmod\ (\mathbb{Q}^{\times})^{2} \right), &\text{ if } P = \mathcal{O},\\
				A \left( \bmod\ (\mathbb{Q}^{\times})^{2} \right), &\text{ if } P = T,\\
				x \left( \bmod\ (\mathbb{Q}^{\times})^{2} \right), &\text{ if } P = (x,y), \text{ with } x \neq 0.
			\end{aligned}
		\end{cases}$$}
	\item{$\overline{\alpha}: \overline{E}(\mathbb{Q}) \to \mathbb{Q}^{\times}/(\mathbb{Q}^{\times})^{2}$ given by
	$$ \overline{\alpha}(P) = 
		\begin{cases}
			\begin{aligned}
				1 \left( \bmod\ (\mathbb{Q}^{\times})^{2} \right), &\text{ if } \overline{P} = \overline{\mathcal{O}},\\
				-A \left( \bmod\ (\mathbb{Q}^{\times})^{2} \right), &\text{ if } \overline{P} = \overline{T},\\
				\overline{x} \left( \bmod\ (\mathbb{Q}^{\times})^{2} \right), &\text{ if } \overline{P} = \left( \overline{x}, \overline{y} \right), \text{ with } \overline{x} \neq 0.
			\end{aligned}
		\end{cases}$$}
\end{enumerate}
These homomorphisms are related via the following isomorphisms:
\begin{enumerate}[label = ($\roman*$)]
	\item{$\alpha(E(\mathbb{Q})) \cong E(\mathbb{Q})/\psi \left(\overline{E}(\mathbb{Q}) \right)$}
	\item{$\overline{\alpha}\left( \overline{E}(\mathbb{Q}) \right) \cong \overline{E}(\mathbb{Q})/\phi(E(\mathbb{Q}))$.}
\end{enumerate}

\noindent The following proposition shows us how these groups 
can help us study the rank of the elliptic curve $E$.

\begin{proposition}\label{sec3prop1}
Let $\alpha$ and $\overline{\alpha}$ be the homomorphisms described 
above. Suppose $A$ is a nonzero integer. If $r$ denotes the rank of the 
elliptic curve $E: y^2 = x \left( x^2 + A \right)$ over $\mathbb{Q}$, then
\begin{equation}\label{rankformula}
	2^{r} = \dfrac{\#\alpha\left( E(\mathbb{Q}) \right) \cdot \#\overline{\alpha}\left( \overline{E}(\mathbb{Q}) \right)}{4}.
\end{equation}
\end{proposition}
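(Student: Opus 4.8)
The plan is to derive \eqref{rankformula} from the standard 2-descent exact sequence relating the $\phi$- and $\psi$-isogenies to multiplication by $2$. The key structural input is the isogeny factorization $\psi \circ \phi = [2]$ on $E(\mathbb{Q})$, which yields a filtration
\[
\phi^{-1}\bigl(\psi\bigl(\overline{E}(\mathbb{Q})\bigr)\bigr) \supseteq \ker\phi = \{\mathcal{O}, T\},
\]
together with the exact sequences encoding the cokernels of $\phi$ and $\psi$. Concretely, I would first invoke the two isomorphisms stated just above the proposition, namely $\alpha(E(\mathbb{Q})) \cong E(\mathbb{Q})/\psi(\overline{E}(\mathbb{Q}))$ and $\overline{\alpha}(\overline{E}(\mathbb{Q})) \cong \overline{E}(\mathbb{Q})/\phi(E(\mathbb{Q}))$, so the right-hand side of \eqref{rankformula} becomes
\[
\frac{1}{4}\cdot \#\bigl(E(\mathbb{Q})/\psi(\overline{E}(\mathbb{Q}))\bigr)\cdot \#\bigl(\overline{E}(\mathbb{Q})/\phi(E(\mathbb{Q}))\bigr).
\]
The goal is then to show this product equals $\#\bigl(E(\mathbb{Q})/2E(\mathbb{Q})\bigr)$, and finally to connect $\#\bigl(E(\mathbb{Q})/2E(\mathbb{Q})\bigr)$ to $2^r$ via the Mordell–Weil theorem.

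Next I would establish the index computation. Since $\phi$ maps $E(\mathbb{Q})$ to $\overline{E}(\mathbb{Q})$ and $\psi$ maps back with $\psi\circ\phi = [2]$, there is a natural surjection $E(\mathbb{Q})/2E(\mathbb{Q}) \twoheadrightarrow E(\mathbb{Q})/\psi(\overline{E}(\mathbb{Q}))$ whose kernel is $\psi(\overline{E}(\mathbb{Q}))/2E(\mathbb{Q}) = \psi(\overline{E}(\mathbb{Q}))/\psi(\phi(E(\mathbb{Q})))$. Applying $\psi$ and accounting for $\ker\psi = \{\overline{\mathcal{O}},\overline{T}\}$, this kernel is the image of $\overline{E}(\mathbb{Q})/\phi(E(\mathbb{Q}))$ under $\psi$, which has order $\#\bigl(\overline{E}(\mathbb{Q})/\phi(E(\mathbb{Q}))\bigr)$ divided by the contribution of $\overline{T}$; one checks $\overline{T} \notin \phi(E(\mathbb{Q}))$ (its image under $\overline{\alpha}$ is $-A$, and one argues this is nontrivial in the relevant quotient, or handles it directly), giving the clean multiplicativity
\[
\#\bigl(E(\mathbb{Q})/2E(\mathbb{Q})\bigr) = \frac{\#\bigl(E(\mathbb{Q})/\psi(\overline{E}(\mathbb{Q}))\bigr)\cdot \#\bigl(\overline{E}(\mathbb{Q})/\phi(E(\mathbb{Q}))\bigr)}{2}.
\]
This is the heart of the argument and follows the treatment in \cite{silvermantate}, Section 3.4; the bookkeeping with the two-torsion points $T$ and $\overline{T}$ is where care is needed.

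Finally, I would compute $\#\bigl(E(\mathbb{Q})/2E(\mathbb{Q})\bigr)$ in terms of $r$. By Mordell–Weil, $E(\mathbb{Q}) \cong \tor{E(\mathbb{Q})} \oplus \mathbb{Z}^r$, so $E(\mathbb{Q})/2E(\mathbb{Q}) \cong \bigl(\tor{E(\mathbb{Q})}/2\tor{E(\mathbb{Q})}\bigr) \oplus (\mathbb{Z}/2\mathbb{Z})^r$. For the curves $E: y^2 = x(x^2+A)$ with $A$ a nonzero integer, the full rational $2$-torsion is $\{\mathcal{O}, T\}$ unless $-A$ is a perfect square; in all cases one verifies $\#\bigl(\tor{E(\mathbb{Q})}/2\tor{E(\mathbb{Q})}\bigr) = 2$ (the factor of $2$ coming from the point $T$, with any further $2$-torsion matched by odd-order considerations), so $\#\bigl(E(\mathbb{Q})/2E(\mathbb{Q})\bigr) = 2^{r+1}$. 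Substituting into the multiplicativity relation and then into the expression for the right-hand side of \eqref{rankformula} yields $2^{r+1}/2 = 2^r$ on one side and $\tfrac14 \cdot 2\cdot 2^{r+1} = 2^r$ — wait, more carefully, $\#\alpha(E(\mathbb{Q}))\cdot\#\overline{\alpha}(\overline{E}(\mathbb{Q})) = 2\cdot\#(E(\mathbb{Q})/2E(\mathbb{Q})) = 2\cdot 2^{r+1} = 2^{r+2}$, hence dividing by $4$ gives $2^r$, as claimed.

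The step I expect to be the main obstacle is the multiplicativity relation in the second paragraph: one must carefully chase the diagram
\[
\overline{E}(\mathbb{Q})/\phi(E(\mathbb{Q})) \xrightarrow{\ \psi\ } E(\mathbb{Q})/\psi\bigl(\phi(E(\mathbb{Q}))\bigr) = E(\mathbb{Q})/2E(\mathbb{Q}) \xrightarrow{\ \text{quot}\ } E(\mathbb{Q})/\psi\bigl(\overline{E}(\mathbb{Q})\bigr)
\]
and correctly compute the kernel and image at each stage, in particular tracking exactly how $\ker\psi = \{\overline{\mathcal{O}}, \overline{T}\}$ and the (non)membership $\overline{T} \in \phi(E(\mathbb{Q}))$ affect the counts. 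Everything else is either a direct appeal to the isomorphisms already recorded in the excerpt or a routine application of Mordell–Weil.
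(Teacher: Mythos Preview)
The paper does not actually prove this proposition: immediately after stating it, the text reads ``For a proof of this proposition, see Section 3.6 of \cite{silvermantate}.'' Your proposal is essentially a sketch of that same Silverman--Tate argument, so in spirit you are doing exactly what the paper defers to the reference for.

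That said, your intermediate bookkeeping is not correct in the case that matters most for the paper. For the congruent number curves one has $A=-n^{2}$, so $-A=n^{2}$ is a perfect square and $E(\mathbb{Q})[2]=\{\mathcal{O},(0,0),(n,0),(-n,0)\}$ has order $4$. In this situation your two claims both fail: first, $\overline{T}\in\phi(E(\mathbb{Q}))$, since any nonzero $2$-torsion point $(x_{0},0)$ with $x_{0}\neq 0$ satisfies $\phi((x_{0},0))=(0,0)=\overline{T}$; second, $\#\bigl(E(\mathbb{Q})_{\mathrm{tors}}/2E(\mathbb{Q})_{\mathrm{tors}}\bigr)=\#E(\mathbb{Q})[2]=4$, not $2$. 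Consequently your multiplicativity relation should read
\[
\#\bigl(E(\mathbb{Q})/2E(\mathbb{Q})\bigr)=\#\bigl(E(\mathbb{Q})/\psi(\overline{E}(\mathbb{Q}))\bigr)\cdot\#\bigl(\overline{E}(\mathbb{Q})/\phi(E(\mathbb{Q}))\bigr)
\]
with no factor of $2$ in the denominator, while $\#\bigl(E(\mathbb{Q})/2E(\mathbb{Q})\bigr)=2^{r+2}$. The two corrections cancel and the final formula $2^{r}=\#\alpha(E(\mathbb{Q}))\cdot\#\overline{\alpha}(\overline{E}(\mathbb{Q}))/4$ survives, but your argument as written does not establish it in this case. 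The clean way to handle both cases at once is to use the exact sequence
\[
0\longrightarrow \frac{\ker\psi}{\phi(E(\mathbb{Q})[2])}\longrightarrow \frac{\overline{E}(\mathbb{Q})}{\phi(E(\mathbb{Q}))}\xrightarrow{\ \psi\ } \frac{E(\mathbb{Q})}{2E(\mathbb{Q})}\longrightarrow \frac{E(\mathbb{Q})}{\psi(\overline{E}(\mathbb{Q}))}\longrightarrow 0
\]
and note that $\#\bigl(\ker\psi/\phi(E(\mathbb{Q})[2])\bigr)\cdot\#E(\mathbb{Q})[2]=4$ in either case.
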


For a proof of this proposition, see Section 3.6 of \cite{silvermantate}. 
Now the above formula is only useful once we determine how to compute 
for the cardinality of the groups $\alpha\left( E(\mathbb{Q}) \right)$ 
and $\overline{\alpha}\left( \overline{E}(\mathbb{Q}) \right)$. But 
this task turns out to be a number-theoretic problem described as 
follows. 

\begin{enumerate}[label = (\roman*)]
		\item{$\#\alpha\left( E(\mathbb{Q}) \right)$ is the number of 
		distinct squarefree integers $b_{1}$ satisfying 
		$b_{1} \cdot b_{2} = A$, for which the Diophantine equation
		\begin{equation}
			N^2 = b_{1}M^4 + b_{2}e^4, \label{count1}
		\end{equation}
		has a nontrivial solution $(N, e, M)$ and 
		$$ \gcd(M,e) = \gcd(N,e) = \gcd\left( b_{1}, e \right) = \gcd\left( b_{2}, e \right) = \gcd(M,N) = 1. $$}
		\item{$\#\overline{\alpha}\left( \overline{E}(\mathbb{Q}) \right)$ 
		is the number of distinct squarefree integers 
		$\overline{b_{1}}$ satisfying $\overline{b_{1}} \cdot \overline{b_{2}} = -4A$, 
		for which the Diophantine equation
		\begin{equation}
			N^2 = \overline{b_{1}}M^4 + \overline{b_{2}}e^4, \label{count2}
		\end{equation}
		has a nontrivial solution $(N, e, M)$ and 
		$$ \gcd(M,e) = \gcd(N,e) = \gcd\left( \overline{b_{1}}, e \right) = \gcd\left( \overline{b_{2}}, e \right) = \gcd(M,N) = 1. $$}
\end{enumerate}

For the rest of the paper, we write $\mathcal{T}\left( b_{1} \right)$ 
to refer to  equation \eqref{count1} and 
$\overline{\mathcal{T}}\left( \overline{b_{1}} \right)$ to refer to 
equation \eqref{count2}. 

\begin{remark}\label{sec3rem1}
The group $\alpha\left( E(\mathbb{Q}) \right)$ is multiplicative. So if 
$b_{1}$ and $b'_{1}$ are divisors of $A$ such that 
$\mathcal{T}\left( b_{1} \right)$ and $\mathcal{T}\left( b'_{1} \right)$ 
have nontrivial solutions, meaning $b_1, b'_{1} \in \alpha(E(\mathbb{Q}))$, 
then the product $b_{1} b'_{1} \bmod\ (\mathbb{Q}^{\times})^2$ is also 
an element of $\alpha\left( E(\mathbb{Q}) \right)$. The same is true for the 
multiplicative group $\overline{\alpha}\left( \overline{E}(\mathbb{Q}) \right)$.
\end{remark}

\begin{proposition}\label{eltstopts}
If $b_1$ is an integer so that $\mathcal{T}(b_1)$ has a solution, then the 
corresponding quadruple $(b_1, N, e, M)$ gives rise to a point 
\[ \left( \dfrac{b_1M^2}{e^2}, \dfrac{b_1 MN}{e^3} \right) \text{ in } E(\mathbb{Q}). \]
Similarly, If $\overline{b_1}$ is an integer so that 
$\overline{\mathcal{T}}(\overline{b_1})$ has a solution, then the corresponding 
quadruple $(\overline{b_1}, N, e, M)$ gives rise to a point 
\[ \left( \dfrac{\overline{b_1}M^2}{e^2}, \dfrac{\overline{b_1} MN}{e^3} \right) \text{ in } \overline{E}(\mathbb{Q}). \]
\end{proposition}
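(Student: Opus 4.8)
The plan is to verify directly that the proposed coordinates satisfy the Weierstrass equations of $E$ and $\overline{E}$, and then to check that they are genuinely $\mathbb{Q}$-rational points. Starting with the first assertion, suppose $\mathcal{T}(b_1)$ holds, so that $N^2 = b_1 M^4 + b_2 e^4$ with $b_1 b_2 = A$. Set $x = b_1 M^2/e^2$ and $y = b_1 MN/e^3$; I would compute $x(x^2+A)$ and show it equals $y^2$. Concretely, $x^2 + A = b_1^2 M^4/e^4 + b_1 b_2 = b_1 (b_1 M^4 + b_2 e^4)/e^4 = b_1 N^2/e^4$ by the defining equation, and therefore $x(x^2+A) = (b_1 M^2/e^2)(b_1 N^2/e^4) = b_1^2 M^2 N^2/e^6 = y^2$. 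This is the crux and it is a short algebraic identity; the key observation is that the substitution is engineered precisely so that the factor $b_1 M^4 + b_2 e^4$ appears and can be replaced by $N^2$.

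Next I would treat the companion curve $\overline{E}: y^2 = x^3 - 4Ax$ in the same way. Here $\overline{\mathcal{T}}(\overline{b_1})$ gives $N^2 = \overline{b_1} M^4 + \overline{b_2} e^4$ with $\overline{b_1}\,\overline{b_2} = -4A$. Setting $\overline{x} = \overline{b_1} M^2/e^2$ and $\overline{y} = \overline{b_1} MN/e^3$, the identity $\overline{x}^2 - 4A = \overline{b_1}^2 M^4/e^4 + \overline{b_1}\,\overline{b_2} = \overline{b_1}(\overline{b_1} M^4 + \overline{b_2} e^4)/e^4 = \overline{b_1} N^2/e^4$ yields $\overline{x}(\overline{x}^2 - 4A) = \overline{b_1}^2 M^2 N^2/e^6 = \overline{y}^2$, so the point lies on $\overline{E}$. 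This is the same computation with $-4A$ in place of $A$, so I would present it briefly or leave it to the reader as the symmetric case.

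Finally, I would remark that these points are rational: since $b_1, M, N, e$ are integers and the coprimality conditions in particular force $e \neq 0$ (a nontrivial solution is assumed, and $e = 0$ would violate, e.g., $\gcd(M,e)=1$ together with nontriviality, or simply is excluded by the standing definition of "nontrivial solution"), both coordinates are well-defined elements of $\mathbb{Q}$. Hence the quadruple determines a point of $E(\mathbb{Q})$, respectively $\overline{E}(\mathbb{Q})$. I do not anticipate a real obstacle here: the statement is essentially a bookkeeping lemma that records the inverse of the construction used to translate points into solutions of $\mathcal{T}(b_1)$, and the only thing to be careful about is making sure the denominator $e$ is nonzero and that the algebra is arranged to expose the quantity $b_1 M^4 + b_2 e^4$ so it can be replaced by $N^2$.
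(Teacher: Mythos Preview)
Your direct verification is correct and is exactly the natural argument; the paper in fact states this proposition without proof, so there is nothing to compare against. One small correction: your claim that the coprimality conditions force $e\neq 0$ is not quite right, since $\gcd(M,0)=|M|$ and the paper explicitly lists solutions such as $(N,e,M)=(1,0,1)$ for $\mathcal{T}(1)$; in that boundary case the affine formula does not apply and the corresponding point is $\mathcal{O}$, so the proposition should be read as covering the generic case $e\neq 0$ (or interpreted projectively).
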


Most of the time we will be interested in the elliptic curve $E/\mathbb{Q}$. 
If we get a point $P \in \overline{E}(\mathbb{Q})$, we can use the 
homomorphism $\psi$ to get the point $\psi(P) \in E(\mathbb{Q})$.

\section{An expression for congruent numbers}

\noindent In Section 2, we saw that a positive integer $n$ is a congruent number 
if and only if the equivalent system of equations \eqref{eq1} and \eqref{eq4.1} 
are solvable. At the end of the discussion, we posited considering any nonzero 
integer $n$ for \eqref{eq1} and \eqref{eq4.1}. But this meant letting go of the 
geometric connotation (\emph{i.e.} rational right triangles, three squares in 
arithmetic progression) and reducing the problem to determining the solvability 
of systems of equations. The idea to move the discussion to the ring of integers 
stems from the relationship between congruent numbers $n$ and elliptic curves 
of the form $E: y^2 = x^3 - n^2 x$. Notice how the elliptic curve does not 
discriminate between positive and negative integers. As a matter of fact, 
Proposition \ref{sec1prop1}and Proposition \ref{sec2prop1} can be refocused to 
include negative integers $n$. 

Let $n > 0$. One proof of Proposition \ref{sec1prop1} makes use of one-to-one 
correspondence between the sets $A$ and $B$, where 
\[ A[n] = \left\{ (a, b, c) \in \mathbb{Q}^3: a^2 + b^2 = c^2, ab = 2n, abc \neq 0 \right\}, \]
\[ B[n] = \left\{ (x,y) \in \mathbb{Q}^2: y^2 = x^3 - n^2 x, y \neq 0. \right\}. \]
Here, 
$\tor{E(\mathbb{Q})} \cong \mathbb{Z}/2\mathbb{Z} \times \mathbb{Z}/2\mathbb{Z}$, 
meaning a point of finite order in $E(\mathbb{Q})$ has order either one or two. In fact, 
$\tor{E(\mathbb{Q})} = \{ \mathcal{O}, (0, 0), (\pm n, 0) \}$. This tells us that $B[n]$ 
is the set of all points of $E(\mathbb{Q})$ of infinite order. Now the correspondence 
between the sets $A$ and $B$ is given by 
\[ \varphi: (a, b, c) \mapsto \left( \dfrac{nb}{c - a}, \dfrac{2n^2}{c - a} \right), \]
\[ \psi: (x, y) \mapsto \left( \dfrac{x^2 - n^2}{y}, \dfrac{2nx}{y}, \dfrac{x^2 + n^2}{y} \right). \]

Let $m$ be a positive integer. It is easy to see that $B[m] = B[-m]$ and 
$A[m] = A[-m]$. We show that there is also a one-to-one correspondence 
between $A$ and $B$ when $n < 0$. 


\noindent Let $a, b, c \in \mathbb{Q}^+$ and $x, y \in \mathbb{Q}$. 

\begin{center}
\begin{tabular}{c | c | c}
\hline
 & $n = m$ & $n = -m$\\ \hline
 {\multirow{4}{*}{$\varphi(A[n])$}} & $(a, b, c) \mapsto \left( \tfrac{mb}{c - a}, \tfrac{2m^2}{c - a} \right)$ & $(a, -b, c) \mapsto \left( \tfrac{mb}{c - a}, \tfrac{2m^2}{c - a} \right)$\Tstrut\Bstrut\\ \cline{2 - 3}
	 & $(-a, -b, -c) \mapsto \left( \tfrac{mb}{c - a}, -\tfrac{2m^2}{c - a} \right)$ & $(-a, b, -c) \mapsto \left( \tfrac{mb}{c - a}, -\tfrac{2m^2}{c - a} \right)$\Tstrut\Bstrut\\ \cline{2 - 3}
	 & $(-a, -b, c) \mapsto \left( -\tfrac{mb}{c + a}, \tfrac{2m^2}{c + a} \right)$ & $(-a, b, c) \mapsto \left( -\tfrac{mb}{c + a}, \tfrac{2m^2}{c + a} \right)$\Tstrut\Bstrut\\ \cline{2 - 3}
	 & $(a, b, -c) \mapsto \left( -\tfrac{mb}{c + a}, -\tfrac{2m^2}{c + a} \right)$ & $(a, -b, -c) \mapsto \left( -\tfrac{mb}{c + a}, -\tfrac{2m^2}{c + a} \right)$\Tstrut\Bstrut\\ \cline{1 - 3}
 {\multirow{2}{*}{$\varphi^{-1}(B[n])$}} & $(x,y) \mapsto \left( \tfrac{x^2 - m^2}{y}, \tfrac{2mx}{y}, \tfrac{x^2 + m^2}{y} \right)$ & $(x,-y) \mapsto \left( \tfrac{x^2 - m^2}{y}, -\tfrac{2mx}{y}, \tfrac{x^2 + m^2}{y} \right)$\Tstrut\Bstrut\\ \cline{2 - 3}
 	& $(x,-y) \mapsto \left( -\tfrac{x^2 - m^2}{y}, -\tfrac{2mx}{y}, -\tfrac{x^2 + m^2}{y} \right)$ & $(x,y) \mapsto \left( -\tfrac{x^2 - m^2}{y}, \tfrac{2mx}{y}, -\tfrac{x^2 + m^2}{y} \right)$\Tstrut\Bstrut\\ \cline{1 - 3}
\end{tabular}
\end{center}

The above table shows us that 
\[ \varphi(A[-m]) = \varphi(A[m]) \subseteq B[m] = B[-m] \quad \text{and} \quad \psi(B[-m]) \subseteq A[-m], \] 
and we achieve this with $\varphi$ by replacing $(a, b, c)$ in $A[m]$ with $(a, -b, c)$ 
for $A[-m]$. Finally, one can check that $\psi \circ \varphi = I_{A[-m]}$ and 
$\varphi \circ \psi = I_{B[-m]}$. We summarize this result through the following 
proposition. 

\begin{proposition}\label{sec4prop1}
Let $n$ be a nonzero integer. The system \eqref{eq1} has a solution in 
$\mathbb{Q}$ if and only if the elliptic curve $y^2 = x^3 - n^2 x$ has positive 
rank.
\end{proposition}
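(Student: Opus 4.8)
The plan is to reduce the statement, in both directions at once, to the one-to-one correspondences already assembled in this section together with Proposition \ref{sec1prop1}. First I would observe that for $n \neq 0$ a triple $(X,Y,Z) \in \mathbb{Q}^3$ solving \eqref{eq1} automatically has $XYZ \neq 0$: the equation $XY = 2n$ forces $X, Y \neq 0$, and then $Z^2 = X^2 + Y^2 > 0$ forces $Z \neq 0$. Hence \eqref{eq1} is solvable in $\mathbb{Q}$ if and only if $A[n] \neq \emptyset$. Moreover the curve $y^2 = x^3 - n^2 x$ depends on $n$ only through $n^2$, and $(a,b,c) \mapsto (a,-b,c)$ is a bijection $A[n] \to A[-n]$ (it negates $ab$ while preserving both $a^2 + b^2 = c^2$ and $abc \neq 0$); so it is enough to prove the proposition for $n > 0$, the case $n < 0$ following by replacing $n$ with $|n|$.

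So suppose $n > 0$. If $n$ is squarefree, the maps $\varphi$ and $\psi$ recalled just before the table above give mutually inverse bijections between $A[n]$ and $B[n]$, and since $\tor{E(\mathbb{Q})} = \{\mathcal{O}, (0,0), (\pm n, 0)\}$ is precisely the set of rational points with $y = 0$, the set $B[n]$ is exactly the set of rational points of infinite order on $E_n : y^2 = x^3 - n^2 x$; thus $A[n] \neq \emptyset$ if and only if $E_n$ has positive rank, which is Proposition \ref{sec1prop1}. If $n > 0$ is not squarefree, write $n = \alpha^2 n_0$ with $n_0$ squarefree: by Proposition \ref{sec2prop1} the solvability of \eqref{eq1} for $n$ is equivalent to its solvability for $n_0$, while $(x,y) \mapsto (\alpha^{-2} x, \alpha^{-3} y)$ is a $\mathbb{Q}$-isomorphism from $E_n$ onto $E_{n_0}$, so the two curves have the same rank; this reduces the general positive case to the squarefree one.

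For completeness I would also read off the negative case directly from the table, which is the form in which it was prepared: for $n = -m$ with $m > 0$, the table exhibits mutually inverse maps $\varphi \colon A[-m] \to B[-m]$ and $\psi \colon B[-m] \to A[-m]$, and since $B[-m] = B[m]$ is the set of infinite-order points of $E_{-m} = E_m$, non-emptiness of $A[-m]$ is equivalent to $E_{-m}$ having positive rank. I do not expect a genuine obstacle: the only points needing verification are routine and largely already carried out, namely that the denominators $c - a$, $c + a$, $y$ occurring in $\varphi$ and $\psi$ are nonzero on $A[n]$ and $B[n]$ (if $c = \pm a$ then $b = 0$, and $y \neq 0$ by the definition of $B[n]$), that the maps in the table are indeed mutually inverse, and the torsion computation $\tor{E(\mathbb{Q})} = \{\mathcal{O}, (0,0), (\pm n, 0)\}$. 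The mild nuisance, if any, is the bookkeeping needed to pass from Proposition \ref{sec1prop1}, which is stated for squarefree $n$, to an arbitrary nonzero integer $n$, and that is handled above by the isomorphism $E_n \cong E_{n_0}$ and Proposition \ref{sec2prop1}.
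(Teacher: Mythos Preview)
Your proposal is correct and follows essentially the same route as the paper: the paper's argument is precisely the discussion preceding the proposition, namely identifying solutions of \eqref{eq1} with $A[n]$, using the bijections $\varphi,\psi$ between $A[n]$ and $B[n]$, the torsion computation, and then extending to $n<0$ via the table (which encodes exactly your bijection $(a,b,c)\mapsto(a,-b,c)$). The only minor difference is that you insert an explicit reduction to the squarefree case in order to quote Proposition~\ref{sec1prop1} as stated, whereas the paper simply applies the maps $\varphi,\psi$ and the torsion description directly for arbitrary nonzero $n$; both are fine, and your extra step is harmless bookkeeping rather than a different idea.
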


We can also refocus Proposition \ref{sec2prop1} into a search for a solution to 
the system \eqref{eq1}. 

\begin{proposition}\label{sec4prop2}
Let $n$ be a nonzero integer. The system \eqref{eq1} has a solution in $\mathbb{Q}$ 
if and only if the same system has a solution in $\mathbb{Q}$ when $n$ is replaced 
$\alpha^2 n$, for some $\alpha \in \mathbb{Z} \setminus \{0\}$.
\end{proposition}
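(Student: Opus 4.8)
The plan is to prove Proposition \ref{sec4prop2} by a direct scaling argument, exactly parallel to the reasoning preceding Proposition \ref{sec2prop1} but now freed from the positivity assumption. Suppose first that $(X,Y,Z) \in \mathbb{Q}^3$ is a solution of \eqref{eq1} for $n$, i.e.\ $X^2 + Y^2 = Z^2$ and $XY = 2n$. For any $\alpha \in \mathbb{Z} \setminus \{0\}$, I would check that $(\alpha X, \alpha Y, \alpha Z)$ satisfies \eqref{eq1} with $n$ replaced by $\alpha^2 n$: indeed $(\alpha X)^2 + (\alpha Y)^2 = \alpha^2 Z^2 = (\alpha Z)^2$ and $(\alpha X)(\alpha Y) = \alpha^2 (XY) = 2\alpha^2 n$. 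Conversely, if $(X', Y', Z') \in \mathbb{Q}^3$ solves \eqref{eq1} for $\alpha^2 n$, then $\left( \tfrac{X'}{\alpha}, \tfrac{Y'}{\alpha}, \tfrac{Z'}{\alpha} \right)$ solves \eqref{eq1} for $n$, by the same computation run backwards. This already gives the equivalence.

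Alternatively — and perhaps more in keeping with the ``refocusing'' theme of the section — I would deduce it from results already in hand. By Proposition \ref{sec4prop1}, for a nonzero integer $n$ the system \eqref{eq1} has a rational solution if and only if $E_n : y^2 = x^3 - n^2 x$ has positive rank; applying the same proposition to $\alpha^2 n$, the system \eqref{eq1} for $\alpha^2 n$ has a rational solution if and only if $E_{\alpha^2 n} : y^2 = x^3 - \alpha^4 n^2 x$ has positive rank. So it suffices to observe that $E_n$ and $E_{\alpha^2 n}$ are isomorphic over $\mathbb{Q}$ via the admissible change of variables $(x,y) \mapsto (\alpha^2 x, \alpha^3 y)$, hence have the same rank. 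Either route works; I would likely present the elementary scaling argument first since it is self-contained and mirrors the text, and then remark on the elliptic-curve interpretation.

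There is essentially no obstacle here: the only point requiring a word of care is that the argument is valid for negative $n$ as well as positive $n$, which is precisely why the proposition is stated for $n \in \mathbb{Z} \setminus \{0\}$ rather than just $n \in \mathbb{N}$ — the scaling $(X,Y,Z) \mapsto (\alpha X, \alpha Y, \alpha Z)$ and its inverse make sense over $\mathbb{Q}$ regardless of sign, and the equivalence between \eqref{eq1} and \eqref{eq4.1} (and the isomorphism $E_n \cong E_{\alpha^2 n}$) were already noted to extend to this generality. I would add one sentence emphasizing that $\alpha \ne 0$ guarantees division by $\alpha$ is legitimate, so the map $(X', Y', Z') \mapsto (X'/\alpha, Y'/\alpha, Z'/\alpha)$ is well defined, and that the two maps are mutually inverse, completing the proof.
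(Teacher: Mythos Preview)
Your proposal is correct and matches the paper's approach: the paper does not give a standalone proof of this proposition but simply says to ``refocus'' Proposition~\ref{sec2prop1}, whose proof (given just before its statement) is precisely the scaling argument $(X,Y,Z) \mapsto (\alpha X, \alpha Y, \alpha Z)$ and its inverse that you present. Your alternative route via Proposition~\ref{sec4prop1} and the isomorphism $E_n \cong E_{\alpha^2 n}$ is also sound and in fact anticipates the remark the paper makes immediately after the proposition.
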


Proposition \ref{sec4prop1} and Proposition \ref{sec4prop2} combined, tells us that 
the elliptic curve $E_1/\mathbb{Q}: y^2 = x^3 - n^2 x$ has positive rank if and 
only if $E_2/\mathbb{Q}: y^2 = x^3 - (\alpha^2 n)^2 x$ has positive rank. This is 
not hard to see as $E_1$ and $E_2$ are in fact isomorphic over $\mathbb{Q}$, and 
the  correspondence is given by 
\begin{align*}
	E_1 &\to E_2\\
	(x,y) &\mapsto (\alpha^2 x, \alpha^3 y)\\
	\left( \tfrac{x}{\alpha^2}, \tfrac{y}{\alpha^3} \right) &\mapsfrom (x,y).
\end{align*}

\begin{remark}\label{sec4rem1}
In general, if $\mu \in \overline{\mathbb{Q}}^{\ast}$, then the elliptic curves 
$E_1/\mathbb{Q}: y^2 = x^3 - Ax$ and $E_2/\mathbb{Q}: y^2 = x^3 - A\mu^4 x$ 
are isomorphic over $\mathbb{Q}$. 
\end{remark}

Recall that Proposition \ref{sec2prop2} says that if $n$ is a nonzero integer for which 
the system \eqref{eq4.1} has a solution, then we can write $nm^2 = uv(u^2 - v^2)$, 
for some integers $u$, $v$, and $m$. We can show that the converse of this result 
holds as well. Suppose we have a nonzero integer $n$ for which the equation 
$nm^2 = uv(u^2 - v^2)$ has a solution $(u, v, m) \in \mathbb{Z}^3$. Dividing both 
sides by $u^4$, we get 
\[ n\left( \frac{m}{u^2} \right)^2 = \left( \frac{u}{v} \right)^3 - \frac{nu}{v}. \]
Then multiply both sides by $n^3$ to obtain
\[ \frac{n^2 m}{u^2} = \frac{nu}{v} - n^2\left( \frac{u}{v} \right). \]
This tells us that the elliptic curve $E_n/\mathbb{Q}: y^2 = x^3 - n^2 x$ has the point 
$\left( \tfrac{n^2 m}{u^2}, \tfrac{nu}{v} \right)$, which is a point of infinite order as 
$\tfrac{nu}{v} \neq 0$. Therefore, $\rank(E_n) \geq 1$. Finally, we refer to the 
following diagram to complete the proof. 
\begin{center}
\begin{tikzpicture}
\node[anchor = north] (A) at (0,0) {\eqref{eq1} has a solution in $\mathbb{Q}$}; 
\node[anchor = north] (B) at (6,0) {\eqref{eq4.1} has a solution in $\mathbb{Z}$}; 
\node[anchor = south] (C) at (6, -2.5) {$nm^2 = uv\left( u^2 - v^2 \right)$};
\node[anchor = south] (D) at (0,-2.5) {$\rank(E_A) \geq 1$}; 
\draw[<->, >=stealth] (A) -- (B);
\node[anchor = south] at (3,-0.25) {\scriptsize{Sec 1}}; 
\draw[->, >=stealth] (C) -- (D);
\draw[->, >=stealth] (B) -- (C);
\node[anchor = west] at (6,-1.25) {\scriptsize{Prop \ref{sec2prop2}}}; 
\draw[<->, >=stealth] (A) -- (D);
\node[anchor = west] at (-1.3,-1.1) {\scriptsize{Prop \ref{sec4prop1}}}; 
\node[anchor = west] at (-1.55,-1.4) {\scriptsize{and Rem \ref{sec4rem1}}}; 
\end{tikzpicture}
\end{center}

The following theorem summarizes the result that we have so far. 

\begin{restatable}{theorem}{mainresult}\label{sec4thm1}
Fix $n \in \mathbb{Z} \setminus\{0\}$. The elliptic curve $y^2 = x^3 - n^2 x$ 
has positive rank over $\mathbb{Q}$ if and only if we can write
\[ nm^2 = uv(u^2 - v^2), \]
for some integers $u$ and $v$.
\end{restatable}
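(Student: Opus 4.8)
The plan is to prove the two implications separately; each follows by assembling results already established, so the work is essentially bookkeeping. For the reverse implication, suppose $nm^2 = uv(u^2 - v^2)$ for integers $u, v, m$ with $m \neq 0$ (the right-hand side is then forced to be nonzero, so in particular $v \neq 0$). I would exhibit the point $P = \left( \dfrac{nu}{v},\ \dfrac{n^2 m}{v^2} \right)$ and verify directly that it lies on $E_n : y^2 = x^3 - n^2 x$: one has $x^3 - n^2 x = \dfrac{nu}{v}\cdot\dfrac{n^2(u^2 - v^2)}{v^2}$, and replacing $u(u^2 - v^2)$ by $\dfrac{nm^2}{v}$ (legitimate because $uv(u^2 - v^2) = nm^2$) turns the right-hand side into $\dfrac{n^4 m^2}{v^4} = \left(\dfrac{n^2 m}{v^2}\right)^2$. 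Since $n \neq 0$ and $m \neq 0$, the $y$-coordinate of $P$ is nonzero, so $P$ is none of the torsion points $\mathcal{O}, (0,0), (\pm n, 0)$; as these exhaust $\tor{E_n(\mathbb{Q})}$, the point $P$ has infinite order and hence $\rank(E_n) \geq 1$. (This is the computation sketched just before the statement, tidied up; alternatively one may divide the relation through by $v^4$ and invoke Remark \ref{sec4rem1} to transport a rational point across the isomorphism between $y^2 = x^3 - Ax$ and a quartic twist, but the direct substitution is quickest.)

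For the forward implication, suppose $\rank(E_n) \geq 1$. By Proposition \ref{sec4prop1}, the system \eqref{eq1} has a solution in $\mathbb{Q}$. The algebra of Section 2 — rewriting \eqref{eq1} as \eqref{eq2}, clearing denominators to obtain \eqref{eq3}, the choice $(P,Q,R,S) = (ZW, 2W, (X+Y)W, (X-Y)W)$, and the reverse passage from \eqref{eq4.1} back to a rational right triangle — shows \eqref{eq1} and \eqref{eq4.1} are equivalent, and this equivalence is purely formal, hence valid for every nonzero integer $n$; therefore \eqref{eq4.1} has a solution in $\mathbb{Z}$. Proposition \ref{sec2prop2} then produces integers $u, v, m$ with $nm^2 = uv(u^2 - v^2)$, which is precisely the claimed expression. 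Chaining these implications is essentially the diagram preceding the theorem, and it completes the argument.

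I do not anticipate a genuine obstacle, since every ingredient is in hand; the only points requiring care are bookkeeping ones. First, the hypothesis $m \neq 0$ (equivalently $uv(u^2 - v^2) \neq 0$) must be recorded in the reverse direction, as otherwise the construction returns only a torsion point. Second, one should confirm that Proposition \ref{sec4prop1}, Proposition \ref{sec2prop2}, and the equivalence of \eqref{eq1} and \eqref{eq4.1} are being used in exactly the generality in which they were stated, namely for arbitrary $n \in \mathbb{Z} \setminus \{0\}$ — negative or non-squarefree $n$ included, the latter because $E_n$ depends only on $n^2$ and, via Remark \ref{sec4rem1}, on the squarefree part of $n$ up to $\mathbb{Q}$-isomorphism, so that in particular the description of $\tor{E_n(\mathbb{Q})}$ persists. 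Third, the trivial verifications that $v \neq 0$ and that $P \in E_n(\mathbb{Q})$, both immediate once $uv(u^2 - v^2) = nm^2 \neq 0$.
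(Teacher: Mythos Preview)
Your proposal is correct and follows essentially the same route as the paper: the forward direction chains Proposition~\ref{sec4prop1}, the equivalence of \eqref{eq1} and \eqref{eq4.1}, and Proposition~\ref{sec2prop2} exactly as in the diagram preceding the theorem, while for the reverse direction you exhibit the explicit rational point $\left(\tfrac{nu}{v},\tfrac{n^2 m}{v^2}\right)$ on $E_n$ with nonzero $y$-coordinate, which is precisely the computation the paper sketches (indeed your version is a tidied form of it, and your attention to the hypothesis $m\neq 0$ is a genuine improvement in rigor).
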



\section{Proof of Theorem 1 using the method of 2-descent}

The forward direction of Theorem \ref{sec4thm1} is the implication of Proposition 
\ref{sec1prop1} and Proposition \ref{sec2prop1} combined. In the previous section, 
we were able to prove the converse by deriving an elliptic curve from the equation 
$nm^2 = uv\left( u^2 - v^2 \right)$ which we assumed to have a solution in 
$\mathbb{Z}$. In this section, we will show another proof of the converse direction 
using the method of 2-descent. 

We will begin with an integer $n$ that can be written as 
$nm^2 = uv(u^2 - v^2) =: A$, for some integers $u$, $v$, and $m$. Recall that 
the method of 2-descent calculates a lower bound for the rank of the elliptic curve 
$E_n/\mathbb{Q}: y^2 = x^3 - n^2 x$. Now the elliptic curve 
$E_A/\mathbb{Q}: y^2 = x^3 - (nm^2)^2 x = x^3 - Ax$ gives us more to work 
with. But by Remark \ref{sec4rem1}, $E_A$ and $E_n$ are isomorphic over 
$\mathbb{Q}$, so we can work with $E_A$ instead. In Section 3, we saw that 
applying 2-descent to $E_A$ means enumerating equations of the form 
$\mathcal{T}(b_1)$ and $\overline{\mathcal{T}}(\overline{b_1})$ which are 
solvable over $\mathbb{Z}$. Apart from showing that $\rank(E_A) \geq 1$, this 
method will enable us to extract subfamilies of $E_A$ of higher rank later on.\\

During his time, Fermat proved that 1 is not a congruent number. Proposition 
\ref{sec2prop1} extends this fact to squares in $\mathbb{Z}$. We show an analogous 
result: $uv\left( u^2 - v^2 \right)$ can never produce a square or the negative of a 
square in $\mathbb{Z}$. The proofs in this section will make use of the 2-descent 
that we discussed in the previous section. 

\begin{proposition}\label{sec4prop4}
The only solutions to the Diophantine equation $\pm w^2 = uv \left( u^2 - v^2 \right)$ 
are $(u,v,w) = (a,a,0), (0,v,0), (u, 0, 0)$ where $a$ is a nonzero integer.
\end{proposition}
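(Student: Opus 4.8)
The plan is to reduce the whole statement to the single fact that the elliptic curve $E_1/\mathbb{Q}: y^2 = x^3 - x$ has rank zero, and to prove that by running the 2-descent of Section~3 on $E_1$.

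First I would dispose of the sign by symmetry. If $(u,v,w)$ satisfies $-w^2 = uv(u^2-v^2)$, then $(v,u,w)$ satisfies $w^2 = uv(u^2-v^2)$, since $uv(u^2-v^2)$ changes sign under $u \leftrightarrow v$; and the claimed solution set is invariant under interchanging $u$ and $v$. So it suffices to treat $w^2 = uv(u^2-v^2)$. Suppose toward a contradiction that this has a solution with $w \neq 0$. Then $u, v \neq 0$, and dividing by $v^4$ gives $\left(w/v^2\right)^2 = (u/v)^3 - (u/v)$, so $P = \left(u/v,\ w/v^2\right)$ is a rational point on $E_1$. Since $\tor{E_1(\mathbb{Q})} = \{\mathcal{O}, (0,0), (\pm 1,0)\}$ and every torsion point has vanishing $y$-coordinate while $w/v^2 \neq 0$, the point $P$ has infinite order, so $\rank(E_1) \geq 1$.

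Next I would apply 2-descent to $E_1$, written as $y^2 = x(x^2 + A)$ with $A = -1$, so that $\overline{E_1}: y^2 = x^3 + 4x$ and $-4A = 4$. The squarefree divisors of $A = -1$ are $b_1 = \pm 1$, and $b_1 = -1 = \alpha\big((0,0)\big)$ already lies in $\alpha(E_1(\mathbb{Q}))$, so $\#\alpha(E_1(\mathbb{Q})) = 2$. The squarefree divisors of $4$ are $\overline{b_1} \in \{1, -1, 2, -2\}$: the value $1$ is automatic; $\overline{b_1} = 2$ occurs because $\overline{\mathcal{T}}(2): N^2 = 2M^4 + 2e^4$ has the admissible solution $(N,e,M) = (2,1,1)$; and $\overline{b_1} = -1, -2$ are excluded because $\overline{\mathcal{T}}(-1): N^2 = -M^4 - 4e^4$ and $\overline{\mathcal{T}}(-2): N^2 = -2M^4 - 2e^4$ have negative right-hand side for every nonzero $(M,e)$. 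Hence $\#\overline{\alpha}\big(\overline{E_1}(\mathbb{Q})\big) = 2$, and Proposition~\ref{sec3prop1} gives $2^{\rank(E_1)} = (2 \cdot 2)/4 = 1$, i.e.\ $\rank(E_1) = 0$, contradicting the previous paragraph. Therefore $w = 0$, so $uv(u^2-v^2) = 0$ and one of $u = 0$, $v = 0$, $u^2 = v^2$ holds, which yields the asserted (trivial) solutions.

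Every computation here is elementary, so the real care is in the 2-descent bookkeeping: correctly accounting for the trivial and torsion contributions (the order-$4$ point $(2,4)\in\overline{E_1}(\mathbb{Q})$ is exactly what forces $2 \in \overline{\alpha}\big(\overline{E_1}(\mathbb{Q})\big)$), verifying the coprimality side-conditions on the sample solution $(2,1,1)$, and observing that $\#\alpha$ and $\#\overline{\alpha}$ cannot exceed $2$ because we have exhausted all eligible $b_1$ and $\overline{b_1}$. The only non-routine ingredient is the conclusion $\rank(E_1) = 0$ — the 2-descent reincarnation of Fermat's theorem that $1$ is not a congruent number — and that is where essentially all the content sits.
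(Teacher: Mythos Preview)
Your proof is correct and follows the same overall arc as the paper's: reduce to the plus sign by the $u\leftrightarrow v$ symmetry, then show that a nontrivial solution would produce a rational point with nonzero $y$-coordinate on $E_1:y^2=x^3-x$, which is impossible because $E_1(\mathbb{Q})$ consists only of the four 2-torsion points.

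The difference lies in how that last fact is justified. The paper simply invokes the quartic case of Fermat's Last Theorem to conclude that $y^2=x^3-x$ has only the points $(0,0),(\pm1,0)$ over $\mathbb{Q}$. You instead run the 2-descent machinery of Section~3 on $E_1$ directly, computing $\#\alpha(E_1(\mathbb{Q}))=2$ and $\#\overline{\alpha}(\overline{E_1}(\mathbb{Q}))=2$ to obtain $\rank(E_1)=0$ from Proposition~\ref{sec3prop1}. Your route is more self-contained and better aligned with the paper's toolkit, whereas the paper's appeal to Fermat is shorter but imports an external result. Both land in the same place.
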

\begin{proof}
Without loss of generality, we only need to show that the Diophantine equation  
$w^2 = uv \left( u^2 - v^2 \right)$ has no nontrivial solutions. Now by a suitable 
change of coordinates, this equation can be transformed into the elliptic curve 
$y^2 = x^3 - x$. An application of Fermat's last theorem (quartic case) tells 
us that this elliptic curve only has $(x,y) = (0, 0), (\pm 1, 0)$ as solutions over 
the rationals. 
\end{proof}

Let $A = nm^2 = uv(u^2 - v^2)$. Suppose $\gcd(u,v) = d$. Then we can write 
$u = du_1$ and $v = dv_1$ where $\gcd(u_1, v_1) = 1$. This results to 
$A = d^4 u_1 v_1 \left( u_1^2 - v_1^2 \right)$. Since our goal is to study the rank 
of $E_A$, by Remark \ref{sec4rem1}, we can ignore the factor $d^4$ in $A$. 
This tells us that we can simply assume that $u$ and $v$ are relatively prime in 
the first place. 

Consider the case when $uv$ is a square or the negative of a square. With the 
recent assumption we made above, this can only mean that both $|u|$ and $|v|$ 
are squares. Suppose we have $|u| = u_1^2$ and $|v| = v_1^2$ with 
$\gcd(u_1, v_1) = 1$. So $A = \pm u_1^2 v_1^2 \left( u_1^4 - v_1^4 \right)$. 
Again, by Remark \ref{sec4rem1}, we can leave out the squares and instead 
consider $E_A: y^2 = x^3 - A^2 x$ with $A = \left( u_1^4 - v_1^4 \right)$. Below 
we have a lemma that will help us determine the rank of $E_A$.

\begin{lemma}\label{sec4lem1}
The equation
	\begin{equation}
		x^2 + y^2 = 2z^2 \label{sec4lem1eq}
	\end{equation}
has no integer solutions when $x$ and $y$ have opposite parities. When $x$ 
and $y$ are relatively prime odd integers, the complete solution is given by 
$(x,y,z) = (r^2 + 2rs - s^2, r^2 - 2rs - s^2, r^2 + s^2)$ for relatively prime 
integers $r$ and $s$.
\end{lemma}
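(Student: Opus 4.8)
The plan is to split into the two cases indicated by the statement and treat them separately. First, suppose $x$ and $y$ have opposite parities. Then $x^2 + y^2 \equiv 1 \pmod{4}$ while $2z^2 \equiv 0$ or $2 \pmod 4$, which is an immediate contradiction; so no solution exists in this case. This handles the first sentence with no real work.

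Now assume $x$ and $y$ are relatively prime odd integers (if they were both even they would not be coprime, so the coprime case forces both odd). The key algebraic move is to factor: rewrite \eqref{sec4lem1eq} as $(x+y)^2 + (x-y)^2 = 2(x^2 + y^2) = 4z^2$, or better, set $p = \tfrac{x+y}{2}$ and $q = \tfrac{x-y}{2}$, which are integers since $x,y$ are both odd. Then $x = p+q$, $y = p-q$, and substituting into $x^2 + y^2 = 2z^2$ gives $2(p^2 + q^2) = 2z^2$, i.e. $p^2 + q^2 = z^2$. So I would reduce the problem to the classical Pythagorean parametrization. One checks that $\gcd(p,q) = 1$: any common prime would divide $p+q = x$ and $p - q = y$, contradicting $\gcd(x,y)=1$; and $p,q$ have opposite parities because $x = p+q$ is odd. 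Hence $(p,q,z)$ is a primitive Pythagorean triple, so (after possibly swapping $p$ and $q$) the standard parametrization gives $p = r^2 - s^2$, $q = 2rs$, $z = r^2 + s^2$ for coprime $r,s$ of opposite parity, or the symmetric version $p = 2rs$, $q = r^2 - s^2$. Translating back via $x = p + q$, $y = p - q$ then yields
\[
x = r^2 + 2rs - s^2, \qquad y = r^2 - 2rs - s^2, \qquad z = r^2 + s^2,
\]
which is exactly the claimed form; the other choice of which of $p,q$ is even, together with sign changes and swapping $x \leftrightarrow y$, accounts for the remaining solutions and is absorbed into "the complete solution."

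Conversely, a direct substitution check shows every such triple $(r^2 + 2rs - s^2,\ r^2 - 2rs - s^2,\ r^2 + s^2)$ satisfies \eqref{sec4lem1eq}: indeed $x^2 + y^2$ expands to $2(r^2+s^2)^2 + $ cross terms that cancel, giving $2z^2$. I expect this verification to be a one-line expansion.

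The only mild subtlety — and the step I would be most careful about — is bookkeeping the "complete solution" claim: making sure that the two branches of the Pythagorean parametrization (which of $p, q$ is the even leg), the sign ambiguities inherent in writing $r, s$, and the $x \leftrightarrow y$ symmetry all reconcile with the single stated formula, and that coprimality of $r$ and $s$ is both necessary and sufficient for $\gcd(x,y) = 1$. Since the lemma as stated only asserts the given family *is* the complete solution set up to these symmetries (and is used later only to generate solutions), a careful but routine matching argument suffices; there is no genuine obstacle, just care with cases. Note also that the lemma, despite its placement in a section on 2-descent, needs no descent machinery — it is elementary number theory via the Pythagorean parametrization.
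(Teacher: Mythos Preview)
Your proposal is correct and follows essentially the same route as the paper: a parity (mod $2$ or $4$) argument for the first claim, and the substitution $x=a+b$, $y=a-b$ (your $p,q$) to reduce $x^2+y^2=2z^2$ to the Pythagorean equation $a^2+b^2=z^2$, followed by the standard parametrization of primitive Pythagorean triples. Your write-up is in fact more careful than the paper's about coprimality and the bookkeeping of branches and signs.
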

\begin{proof}
Since the right hand side of equation \eqref{sec4lem1eq} is even, $x$ and $y$ can 
only be both even or both odd. This is precisely the contrapositive of the first 
statement of the lemma. The second statement can be obtained by writing 
$x= a + b$ and $y = a-b$ for some $a, b \in \mathbb{Z}$ and reducing equation 
\eqref{sec4lem1eq} to the Pythagorean equation $a^2 + b^2 = z^2$. The solution is then 
obtained from the well-known parametrization of the Pythagorean triples.  
\end{proof}

\begin{proposition}\label{sec5prop1}
Let $u$ and $v$ be relatively prime nonzero integers. The elliptic curve 
$E_A: y^2 = x^3 - A^2 x$ with $A= u^4 - v^4$, has rank 
at least 1.
\end{proposition}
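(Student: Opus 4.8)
The plan is to run the $2$-descent of Section~3 on $E_A : y^2 = x^3 - A^2x$ --- in the notation of that section the curve $y^2 = x(x^2+\mathcal{A})$ with $\mathcal{A} = -A^2$, whose companion is $\overline{E}_A : y^2 = x^3 + 4A^2x$. By Proposition~\ref{sec3prop1} it is enough to show $\#\alpha(E_A(\mathbb{Q}))\cdot\#\overline{\alpha}(\overline{E}_A(\mathbb{Q})) \geq 8$, and I would do this by proving $\#\alpha(E_A(\mathbb{Q})) \geq 4$ and $\#\overline{\alpha}(\overline{E}_A(\mathbb{Q})) \geq 2$. We may assume $A \neq 0$, i.e. $u \neq \pm v$, and (since negating $A$ leaves $E_A$ unchanged) that $A > 0$. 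For the first inequality, the full $2$-torsion $\{(0,0),(A,0),(-A,0)\}$ maps under $\alpha$ to the classes $-1,\,A,\,-A \bmod (\mathbb{Q}^\times)^2$, so $\{1,-1,A,-A\}$ is a subgroup of $\alpha(E_A(\mathbb{Q}))$ of order $4$ provided $A$ is neither a square nor minus a square. This is exactly where Fermat enters: $u^4-v^4 = w^2$ and $v^4-u^4 = w^2$ have only trivial solutions (the quartic case used already in Proposition~\ref{sec4prop4}), so $A = u^4-v^4 \neq \pm w^2$ when $uv(u^2-v^2)\neq 0$; hence $\#\alpha(E_A(\mathbb{Q}))\geq 4$. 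This step is essential, not merely cosmetic: if $A$ were $\pm$ a square then $E_A \cong E_1 : y^2 = x^3 - x$, which has rank $0$.

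For the second inequality, write $A = (u^2-v^2)(u^2+v^2)$ and note that the point
\[ Q_0 = \bigl( 2(u^2+v^2)(u+v)^2,\ 4(u^2+v^2)^2(u+v)^2 \bigr) \]
lies on $\overline{E}_A$; this is a direct substitution using $(u+v)^4 + (u^2-v^2)^2 = 2(u^2+v^2)(u+v)^2$, and one may equally phrase it as the point furnished by Proposition~\ref{eltstopts} from a solution of $\overline{\mathcal{T}}(\overline{b_1})$ with $\overline{b_1}$ the squarefree part of $2(u^2+v^2)$. Therefore the class $2(u^2+v^2)(u+v)^2 \equiv 2(u^2+v^2) \pmod{(\mathbb{Q}^\times)^2}$ lies in $\overline{\alpha}(\overline{E}_A(\mathbb{Q}))$, and when $2(u^2+v^2)$ is not a perfect square it is nontrivial, giving $\#\overline{\alpha}(\overline{E}_A(\mathbb{Q}))\geq 2$ and completing the proof in this case. (Pushing $Q_0$ forward by $\psi$ just recovers the ``obvious'' point $((u^2+v^2)^2,\,2uv(u^2+v^2)^2)$ on $E_A$, whose $\alpha$-image is trivial; this is why the extra element must be sought on the companion curve.)

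The one obstruction --- the step I expect to be the main difficulty --- is the case $2(u^2+v^2) = w^2$. A parity argument forces $u,v$ both odd and $u^2+v^2 = 2w^2$, so by Lemma~\ref{sec4lem1} we may write, after possibly negating or swapping $u,v$, that $u = r^2+2rs-s^2$, $v = r^2-2rs-s^2$, $w = r^2+s^2$ with $r,s$ coprime; and since $u$ is odd, $r,s$ have opposite parity. Then $u^2-v^2 = 8rs(r^2-s^2)$ and $u^2+v^2 = 2(r^2+s^2)^2$, so $A = 16(r^2+s^2)^2\,rs(r^2-s^2)$, and by Remark~\ref{sec4rem1} (with $\mu = 4(r^2+s^2)$) $E_A \cong E_{A'}$ with $A' = rs(r^2-s^2)$, where $r,s$ are coprime of opposite parity and $rs(r^2-s^2)\neq 0$. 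Now $A'$ is not $\pm$ a square by Proposition~\ref{sec4prop4}, so $\#\alpha(E_{A'}(\mathbb{Q}))\geq 4$ as before; moreover the direct construction of Section~4 applied to $A' = rs(r^2-s^2)$ produces the point $(r^2(r^2-s^2),\,(A')^2/s^2)$ on $E_{A'}$, contributing the class $r^2-s^2 \bmod (\mathbb{Q}^\times)^2$ to $\alpha(E_{A'}(\mathbb{Q}))$. Unless $r^2-s^2$ or $rs$ is $\pm$ a square, this class lies outside $\{1,-1,\pm A'\}$, so $\#\alpha(E_{A'}(\mathbb{Q}))\geq 8$ and $\rank(E_A) = \rank(E_{A'}) \geq 1$.

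It remains to dispose of the finitely many degenerate sub-cases in which $r^2-s^2$ or $rs$ is $\pm$ a square. Using the standard parametrization of Pythagorean triples, each of these shows $E_{A'}$ isomorphic (again via Remark~\ref{sec4rem1}) to some $E_{a^4-b^4}$ in which $\max(|a|,|b|) < \max(|u|,|v|)$, so the statement follows by strong induction on $\max(|u|,|v|)$ (the base cases, smallest $(u,v)$, being covered by the generic argument of the second paragraph). The technical burden of the whole proof is concentrated here: keeping track of signs in the Lemma~\ref{sec4lem1} parametrization, verifying that the successive reductions strictly decrease $\max(|u|,|v|)$, and checking the squarefreeness and $\gcd$ conditions that accompany the descent equations $\mathcal{T}$ and $\overline{\mathcal{T}}$.
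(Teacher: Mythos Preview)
Your proposal is correct and follows essentially the same route as the paper: the torsion plus Proposition~\ref{sec4prop4} give $\#\alpha\geq 4$; the class $2(u^2+v^2)$ in $\overline\alpha$ (your point $Q_0$ is exactly the point coming from the paper's $\overline{\mathcal{T}}(2(u^2+v^2))$, up to $v\mapsto -v$) gives $\#\overline\alpha\geq 2$ generically; and in the exceptional case $u^2+v^2=2w^2$ you invoke Lemma~\ref{sec4lem1} just as the paper does.

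The only genuine difference is organizational. The paper fixes $A$ (after a minimality assumption) and runs an \emph{iterated descent} on the same curve, alternating between the forms $rs(r^2-s^2)\cdot\square$ and $2wz(w^2+z^2)\cdot\square$, with termination coming from the finiteness of the square factors of $A$. You instead pass to isomorphic curves via Remark~\ref{sec4rem1} and run \emph{strong induction} on $\max(|u|,|v|)$. These are two packagings of the same idea; your framing is arguably cleaner since the descent--termination step becomes an explicit well-ordering. One point you leave implicit is how sub-case~(b) ($r^2-s^2=\pm\gamma^2$) returns to the shape $a^4-b^4$: after the Pythagorean parametrization $r=w^2+z^2,\ s=2wz$, you need the identity $(w+z)^4-(w-z)^4 = 8wz(w^2+z^2)$, so that $E_{A'}\cong E_{(w+z)^4-(w-z)^4}$ (the scaling factor is $4$, a rational square, so Remark~\ref{sec4rem1} applies over $\mathbb{Q}$). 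With that identity in hand the height drop $\max(|w+z|,|w-z|)\leq |w|+|z|<w^2+z^2\leq \max(|u|,|v|)$ is immediate and your induction closes.
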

\begin{proof} 
Fix $A = u^4 - v^4$. Because of Remark \ref{sec4rem1}, we may assume that 
$u^4 - v^4$ cannot be written as $\alpha^2\left( u_1^4 - v_1^4 \right)$, for some 
integers $u_1$, $v_1$, and $\alpha$ with 
$\left| u_1^4 - v_1^4 \right| < \left| u^4 - v^4 \right|$.
Consider the isogenous elliptic curves 
\[
	E_A: y^2 = x \left[ x^2 - \left( u^4 - v^4 \right)^2 \right] 
	\quad \text{and} \quad 
	\overline{E_A}: y^2 = x \left[ x^2 + 4\left( u^4 - v^4 \right)^2 \right].
\]
We enumerate some solvable quartic equations $\mathcal{T}(b_1)$, 
$\overline{\mathcal{T}}(\overline{b_1})$ as discussed in Section 2 to determine the 
orders of 
$\alpha(E_A(\mathbb{Q}))$ and $\overline{\alpha}(\overline{E_A}(\mathbb{Q}))$.
\begin{table}[H]
\renewcommand{\arraystretch}{1.3}
\begin{tabular}{ c | c | c }
 & Quartic equation & Solution $(N, e, M)$\\ \hline
$\mathcal{T}(1)$ & $N^2 = M^4 - \left( u^4 - v^4 \right)^2 e^4$ & $(1, 0, 1)$\\
$\mathcal{T}(-1)$ & $N^2 = - \left[ M^4 - \left( u^4 - v^4 \right)^2 e^4 \right]$ & $(A, 1, 0)$\\
$\mathcal{T}(\pm A)$ & $N^2 = \pm \left( u^4 - v^4 \right) \left[ M^4 - e^4 \right]$ & $(0, 1, 1)$\\
$\mathcal{\overline{T}}(1)$ & $N^2 = M^4 + 4 \left( u^4 - v^4 \right)^2 e^4$ & $\left( u^8 - v^8, uv, u^4 - v^4 \right)$\\
$\mathcal{\overline{T}}\left( 2\left( u^2 + v^2 \right) \right)$ & $N^2 = 2 \left( u^2 + v^2 \right) \left[ M^4 + \left( u^2 - v^2 \right)^2 e^4 \right]$ & $\left( 2(u - v) \left( u^2 + v^2 \right), 1, u - v \right)$
\end{tabular}
\end{table}

\noindent By Proposition \ref{eltstopts} we realize that the elements $-1, +1, -A, +A$ 
of $\alpha(E_A(\mathbb{Q}))$ give rise to the finite ordered points in 
$E_A(\mathbb{Q})$, namely $\mathcal{O}$, $(0,0)$, $(-A,0)$, and $(A,0)$, 
respectively. Moreover, by Proposition \ref{sec4prop4}, $A$ cannot be a perfect 
square. So $\pm 1$, $\pm A$ are distinct elements in 
$\alpha\left( E_A(\mathbb{Q}) \right)$. Now if 1 and 
$2 \left( u^2 + v^2 \right)$ are distinct elements in 
$\overline{\alpha}(\overline{E_A}(\mathbb{Q}))$, then we get 
$\#\alpha(E_A(\mathbb{Q})) \geq 4$ and 
$\#\overline{\alpha}(\overline{E_A}(\mathbb{Q})) \geq 2$. 

The problem is this, is not always the case. The elements 1 and 
$2\left( u^2 + v^2 \right)$ are equal in 
$\overline{\alpha}(\overline{E_A}(\mathbb{Q}))$ if there nonzero integer 
$\beta$ that satisfies $u^2 + v^2 = 2\beta^2$. By Lemma \ref{sec4lem1} 
such a $\beta$ cannot exist when $u$ and $v$ have opposite parities. Now we 
can exclude the case when both $u$ and $v$ are even because they are 
assumed to be relatively prime. This leaves the case when $u$ and $v$ are 
both odd. 
Suppose $u$ and $v$ are odd and that there is a nonzero integer $\beta$ so 
that $u^2 + v^2 = 2\beta^2$. The second statement of Lemma \ref{sec4lem1} 
tells us that we can write $u = r^2 + 2rs - s^2$, $v = r^2 - 2rs - s^2$, and 
$\beta = r^2 + s^2$ for some relatively prime integers $r$ and $s$. Moreover, 
$r$ and $s$ necessarily have opposite parities otherwise $2 \mid \gcd(u,v)$. 
This gives us
\begin{equation}\label{form1}
A =  u^4 - v^4 = 16rs \left( r^2 - s^2 \right)\beta^2, \quad \beta^2 \neq 1.
\end{equation}
We consider the following quartic equations in place of 
$\overline{\mathcal{T}}\left( 2\left( u^2 + v^2 \right) \right)$:

\begin{table}[H]
\renewcommand{\arraystretch}{1.3}
\begin{tabular}{ c | c | c }
 & Quartic equation & Solution $(N, e, M)$\\ \hline
$\mathcal{T}\left( 16\left( r^2 - s^2 \right) \beta^2 \right)$ & $N^2 = 16\left( r^2 - s^2 \right) \beta^2 \left[ M^4 - r^2 s^2 e^4 \right]$ & $\left( 4r\beta\left( r^2 - s^2 \right), 1, r \right)$ \Tstrut\\
$\mathcal{T}\left( -16\left( r^2 - s^2 \right) \beta^2 \right)$ & $N^2 = -16\left( r^2 - s^2 \right) \beta^2 \left[ M^4 - r^2 s^2 e^4 \right]$ & $\left( 4s\beta\left( r^2 - s^2 \right), 1, s \right)$\\
\end{tabular}
\end{table}

\begin{enumerate}[label = (\Roman*)]
	\item{If $r^2 - s^2 \not\equiv \pm 1 \left( \bmod\ (\mathbb{Q}^{\times})^2 \right)$, 
then $\pm 16\left( r^2 - s^2 \right)\beta^2$ and $\pm 1$ are distinct elements 
in $\alpha(E_n(\mathbb{Q}))$. We only need to show that 
$\pm 16\left( r^2 - s^2 \right)\beta^2$, $\pm A$ are distinct elements in 
$\alpha(E_A(\mathbb{Q}))$. Suppose otherwise, then we get
$rs \equiv \pm 1 \left( \bmod\ (\mathbb{Q}^{\times})^2 \right)$. Since $r$ 
and $s$ are relatively prime, we can write $|r| = r_1^2$ and $|s| = s_1^2$ for 
some nonzero integers $r_1$ and $s_1$. Since $\beta \neq 1$, this results to:
$$ A = u^4 - v^4 = \pm 16r_1^2 s_1^2 \left( r_1^4 - s_1^4 \right) \beta^2, \text{ with } \left| r_1^4 -s_1^4 \right| < \left| u^4 - v^4 \right|. $$
By an earlier assumption, this cannot be the case. As $\alpha(E_A(\mathbb{Q}))$ 
is a multiplicative group, we have $\pm rs \in \alpha(E_A(\mathbb{Q}))$. Hence in 
this case, $\#\alpha(E_A(\mathbb{Q})) \geq 8$ and 
$\#\overline{\alpha}(\overline{E_A}(\mathbb{Q})) \geq 1$. 

\begin{table}[H]
\begin{tabular}{ c | c | c }
 & Quartic equation & Solution $(N, e, M)$\\ \hline
$\mathcal{T} \left(\pm 16rs\beta^2 \right)$ & $N^2 = \pm 16rs \left[ M^4 - \left( r^2 - s^2 \right)^2 e^4 \right]$ & $(8rs(r \pm s), 1, r \pm s)$ \Tstrut\\
\end{tabular}
\end{table}}

	\item Now if $r^2 - s^2 \equiv 1 \left( \bmod\ (\mathbb{Q}^{\times})^2 \right)$, 
then we write $ r^2 - s^2 = \gamma^2 $ for some nonzero integer $\gamma$. 
This is the familiar Pythagorean equation which we can parametrize by either
\[\begin{cases}
	\begin{aligned}
		r &= w^2 + z^2\\
		s &= 2wz\\ 
		\gamma &= w^2 - z^2
	\end{aligned}
\end{cases}
\quad \text{or} \quad
\begin{cases}
	\begin{aligned}
		r &= w^2 + z^2\\
		s &= w^2 - z^2\\
		\gamma &= 2wz.
	\end{aligned}
\end{cases}\]
Here, we take $w$ and $z$ to be nonzero relatively prime integers with opposite 
parities, otherwise we will get $2 \mid \gcd(r,s)$, a contradiction. If instead we 
have $r^2 - s^2 \equiv -1 \left( \bmod\ (\mathbb{Q}^{\times})^2 \right)$, 
notice that we can rewrite this as 
$s^2 - r^2 \equiv 1 \left( \bmod\ (\mathbb{Q}^{\times})^2 \right)$. So we do the 
same but the roles of $r$ and $s$ are switched. Going back, note that we cannot use 
the second parametrization as it would imply that $r$ and $s$ are both odd, contrary 
to the assumption that $r$ and $s$ have opposite parities. Instead, we apply the first 
parametrization to obtain
\begin{equation}\label{form2}
	A = 32wz \left( w^2 + z^2 \right)\beta^2\gamma^2, \quad \beta^2, \gamma^2 \neq 1.
\end{equation}
For this $A$, we have $64 \left( w^2 + z^2 \right)\beta^2\gamma^2
\in \overline{\alpha}(\overline{E_A}(\mathbb{Q}))$. 

\begin{table}[H]
\begin{tabular}{ c | c | c }
 & Quartic equation & Solution $(N, e, M)$\\ \hline
$\overline{\mathcal{T}} \left( 64 \left( w^2 + z^2 \right) \beta^2 \gamma^2 \right)$ & $N^2 = 64 \left( w^2 + z^2 \right) \beta^2\gamma^2 \left[ M^4 + w^2 z^2 e^4 \right]$ & $\left( 8w\beta\gamma\left( w^2 + z^2 \right), 1, w \right)$ \Tstrut\\
\end{tabular}
\end{table}

\begin{enumerate}[label = (\alph*)]
	\item{If $w^2 + z^2 \not\equiv 1 \left( \bmod\ (\mathbb{Q}^{\times})^2 \right)$
then $\#\alpha(E_A(\mathbb{Q})) \geq 4$ 
and $\#\overline{\alpha}(\overline{E_A}(\mathbb{Q})) \geq 2$.} 

	\item{If $w^2 + z^2 \equiv 1 \left( \bmod\ (\mathbb{Q}^{\times})^2 \right)$, 
then $w^2 + z^2 = \varepsilon^2$, for some nonzero integer $\varepsilon$. 
This has a well known parametrization
\[\begin{cases}
	\begin{aligned}
		w &= a^2 - b^2\\
		z &= 2ab\\ 
		\delta &= a^2 + b^2
	\end{aligned}
\end{cases}
\quad \text{or} \quad
\begin{cases}
	\begin{aligned}
		w &= 2ab\\
		z &= a^2 - b^2\\
		\delta &= a^2 + b^2.
	\end{aligned}
\end{cases}\]
In any choice of parametrization, we obtain 
\[ A = 64ab \left( a^2 - b^2 \right) \beta^2 \gamma^2 \delta^2 , \quad \beta^2, \gamma^2, \delta^2 \neq 1. \]

This brings us back to the familiar \eqref{form1}. Notice that from here on, the 
process is repetitive. }
\end{enumerate}
\end{enumerate}
We summarize this process below. Note that we there are four sure elements 
of $\alpha(E_A(\mathbb{Q}))$: $\pm 1$, and $\pm A$. Moreover, we know for 
certain that $1 \in \overline{\alpha}(\overline{E_A}(\mathbb{Q}))$. When $A$ 
is of the form $ab\left( a^2 - b^2 \right)\square$, we get four new elements of 
$\#\alpha(E_A(\mathbb{Q}))$. Its difference 
from the existing elements of the group depends on a congruence of the form 
$X^2 - Y^2 \equiv 1 \left( \bmod\ \mathbb{Q}^{\times} \right)$. 
If this is not satisfied then the procedure terminates with 
\[ \#\alpha(E_A(\mathbb{Q})) \geq 8
\quad \text{and} \quad
\#\overline{\alpha}(\overline{E_A}(\mathbb{Q})) \geq 1. \]
If it is, then we get the familiar \eqref{form2}, that is $n$ is of the form 
$2ab(a^2 + b^2)\square$. In here, $\#\alpha(E(\mathbb{Q})) \geq 4$ and 
we can increase $\#\overline{\alpha}(\overline{E}(\mathbb{Q}))$ to at least 
2 by considering a congruence of the form 
$X^2 + Y^2 \equiv 1 \left( \bmod\ \mathbb{Q}^{\times} \right)$. 
If this is not satisfied then the process terminates with
\[ \#\alpha(E_A(\mathbb{Q})) \geq 4
\quad \text{and} \quad
\#\overline{\alpha}(\overline{E_A}(\mathbb{Q})) \geq 2. \]
If it is, then we go back to \eqref{form1}. The important thing to notice is that 
this procedure brings out the nontrivial square factors of $A$. Since $A$ is an 
integer with only finitely many factors, this process is sure to conclude at some 
point.

In any case, by the rank formula \eqref{rankformula} in Proposition 
\ref{sec3prop1}, we get
\[ 2\strut^{\rank(E_A)} \geq \dfrac{4 \cdot 2}{4} \text{\ \ or\ \ } \dfrac{8 \cdot 1}{4}. \]
Hence we have $\rank(E_A) \geq 1$. This completes the proof the proposition.
\end{proof}


To complete the proof of the convers of Theorem \ref{sec4thm1}, we discuss  
the general case. 

\begin{proposition}\label{sec5prop2}
Let $u$ and $v$ be relatively prime nonzero integers. The elliptic curve 
$E_A: y^2 = x^3 - A^2 x$ with $A= uv(u^2 - v^2)$ has rank at least one. 
\end{proposition}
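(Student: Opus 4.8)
The plan is to mirror the structure of the proof of Proposition \ref{sec5prop1}, but now starting from the general factorization $A = uv(u^2-v^2)$ with $\gcd(u,v)=1$, and again invoking Remark \ref{sec4rem1} to strip off nontrivial fourth-power (indeed square) factors of $A$ so that we may assume $A$ is, in the relevant sense, minimal. First I would record the unconditional elements: exactly as in the previous proof, $\mathcal{T}(1)$ has the solution $(1,0,1)$, $\mathcal{T}(-1)$ has $(A,1,0)$, and $\mathcal{T}(\pm A)$ has $(0,1,1)$, so $\pm 1, \pm A \in \alpha(E_A(\mathbb{Q}))$; these are genuinely distinct because by Proposition \ref{sec4prop4} the quantity $A = uv(u^2-v^2)$ is neither a square nor the negative of a square (this is the key place where that proposition earns its keep). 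Hence $\#\alpha(E_A(\mathbb{Q})) \ge 4$ always.

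Next I would produce one more element coming from the factorization into $u$, $v$, $u-v$, $u+v$. The natural candidate is to exhibit a solvable $\mathcal{T}(b_1)$ with $b_1$ built from these factors — for instance writing $A = u\cdot v(u-v)(u+v)$ and checking that $\mathcal{T}(u \cdot \square)$ or one of the $\overline{\mathcal{T}}$-equations (analogous to $\overline{\mathcal{T}}(2(u^2+v^2))$ in the previous proof) has a nontrivial solution; one tries the obvious guesses $(N,e,M)$ with $e=1$ and $M$ a small combination of $u,v$, using that $u,v,u\pm v$ are pairwise coprime up to factors of $2$. The goal is to get either $\#\alpha(E_A(\mathbb{Q})) \ge 8$ with $\#\overline{\alpha}(\overline{E_A}(\mathbb{Q})) \ge 1$, or $\#\alpha \ge 4$ with $\#\overline{\alpha} \ge 2$; by the rank formula \eqref{rankformula} either outcome forces $2^{\rank(E_A)} \ge 8\cdot 1/4$ or $4\cdot 2/4$, i.e.\ $\rank(E_A) \ge 1$, which is exactly what is claimed (and which, via Remark \ref{sec4rem1}, completes the converse of Theorem \ref{sec4thm1} since $E_A \cong E_n$).

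The main obstacle — just as in Proposition \ref{sec5prop1} — is that the extra element one writes down may collapse into the subgroup $\{\pm 1, \pm A\}$ already found; this happens precisely when some sub-product of $u, v, u-v, u+v$ is $\pm$ a square, i.e.\ when $|u|, |v|$, or $|u\pm v|$ carry square parts. In that case I would, exactly as before, peel off the square via a parametrization (if $|u| = u_1^2$ and $|v|=v_1^2$ then $A = \pm u_1^2 v_1^2(u_1^4 - v_1^4)\cdot(\text{something})$, landing us in the situation already handled by Proposition \ref{sec5prop1}; if instead $u\pm v$ is a square one gets a Pythagorean-type relation $u+v = \gamma^2$, $u-v=\delta^2$ hence $2u = \gamma^2+\delta^2$ etc.), and descend. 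The descent terminates because each step exposes a genuine nontrivial square factor of the fixed integer $A$ and $A$ has only finitely many divisors — this is the same finiteness argument used to close the previous proof. So the proof strategy is: (1) four guaranteed elements of $\alpha(E_A(\mathbb{Q}))$ from $\pm 1, \pm A$, justified by Proposition \ref{sec4prop4}; (2) a fifth element from a solvable quartic built out of $u,v,u\pm v$; (3) when (2) degenerates, reduce by a classical parametrization to a smaller instance or to Proposition \ref{sec5prop1}, and invoke finiteness of the factorization of $A$; (4) conclude $\rank(E_A)\ge 1$ from \eqref{rankformula}, hence $\rank(E_n)\ge 1$ by Remark \ref{sec4rem1}.
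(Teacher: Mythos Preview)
Your strategy is the same as the paper's and is correct in outline, but the paper makes two moves crisp where you leave them vague. First, instead of ``trying obvious guesses,'' the paper writes down the extra elements explicitly: $\mathcal{T}(\pm uv)$ is solved by $(N,e,M)=(2uv(u\pm v),\,1,\,u\pm v)$ and $\mathcal{T}\bigl(\pm(u^2-v^2)\bigr)$ by $(u(u^2-v^2),1,u)$ and $(v(u^2-v^2),1,v)$, so that $\pm 1,\ \pm A,\ \pm uv,\ \pm(u^2-v^2)$ all lie in $\alpha(E_A(\mathbb{Q}))$ and generically $\#\alpha(E_A(\mathbb{Q}))\ge 8$. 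Second, the degenerate case $u^2-v^2\equiv\pm 1\pmod{(\mathbb{Q}^\times)^2}$ is not handled by the Pythagorean-style descent you sketch (which would require both $u+v$ and $u-v$ to be squares individually, not just their product); the paper instead sets $u_1=u+v$, $v_1=u-v$, observes that $u_1v_1(u_1^2-v_1^2)=4A$ with $u_1v_1=u^2-v^2\equiv\pm 1$, and thereby lands \emph{directly} in the hypothesis of Proposition~\ref{sec5prop1} applied to $A_1=4A$ (same rank as $A$ by Remark~\ref{sec4rem1}). With these two ingredients the argument is a single paragraph: generically $\#\alpha\ge 8$, and both collapses ($uv\equiv\pm 1$ or $u^2-v^2\equiv\pm 1$) reduce to Proposition~\ref{sec5prop1}, so no iterated descent is needed here at all.
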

\begin{proof}
Consider the isogenous elliptic curves 
\[
	E_A: y^2 = x \left[ x^2 - u^2 v^2 \left( u^2 - v^2 \right)^2 \right] 
	\quad \text{and} \quad
	\overline{E_A}: y^2 = x \left[ x^2 + 4 u^2 v^2 \left( u^2 - v^2 \right)^2 \right].
\]
Similar to Proposition \ref{sec5prop1}, we enumerate some solvable quartic 
equations $\mathcal{T}(b_1)$, $\overline{\mathcal{T}}(\overline{b_1})$ 
to determine the orders of $\alpha(E_A(\mathbb{Q}))$ and 
$\overline{\alpha}(\overline{E_A}(\mathbb{Q}))$.
\begin{table}[H]
\renewcommand{\arraystretch}{1.3}
\begin{tabular}{ c | c | c }
 & Quartic equation & Solution $(N, e, M)$\\ \hline
$\mathcal{T}(1)$ & $N^2 = \left[ M^4 - u^2 v^2 \left( u^2 - v^2 \right)^2 e^4 \right]$ & $(1, 0, 1)$\\
$\mathcal{T}(-1)$ & $N^2 = -\left[ M^4 - u^2 v^2 \left( u^2 - v^2 \right)^2 e^4 \right]$ & $(A, 1, 0)$\\
$\mathcal{T}(\pm A)$ & $N^2 = \pm uv \left( u^2 - v^2 \right) \left[ M^4 - e^4 \right]$ & $(0, 1, 1)$\\
$\mathcal{T}(\pm uv)$ & $N^2 = \pm uv \left[ M^4 - \left( u^2 - v^2 \right)^2 e^4 \right]$ & $\left( 2uv(u \pm v), 1, u \pm v \right)$\\
$\mathcal{T}\left( u^2 - v^2 \right)$ & $N^2 = \left( u^2 - v^2 \right) \left[ M^4 - u^2 v^2 e^4 \right]$ & $\left( u\left( u^2 - v^2 \right), 1, u \right)$\\
$\mathcal{T}\left( -\left( u^2 - v^2 \right) \right)$ & $N^2 = -\left( u^2 - v^2 \right) \left[ M^4 - u^2 v^2 e^4 \right]$ & $\left( v\left( u^2 - v^2 \right), 1, v \right)$\\
$\mathcal{\overline{T}}(1)$ & $N^2 = M^4 + 4 u^2 v^2 \left( u^2 - v^2 \right)^2 e^4$ & $\left( u^4 - v^4, 1, u^2 - v^2 \right)$\\
\end{tabular}
\end{table}

\noindent As in Proposition \ref{sec5prop1}, $\pm 1$, $\pm A$ are the four distinct 
elements of $\alpha(E_n(\mathbb{Q}))$ which correspond to the points in 
$\tor{E_A/(\mathbb{Q})}$. If in particular, $\pm 1$, $\pm uv$, 
$\pm \left( u^2 - v^2 \right)$, and $\pm A$ are distinct as elements of 
$\alpha(E_A(\mathbb{Q}))$, then $\#\alpha(E_A(\mathbb{Q})) \geq 8$ 
and $\#\overline{\alpha}(\overline{E_A}(\mathbb{Q})) \geq 1$. 

Notice that:
\begin{align*}
	 uv \equiv \pm 1 \left( \bmod\ (\mathbb{Q}^{\times})^2 \right) 
	&\iff 
	\left( u^2 - v^2 \right) \equiv \pm A \left( \bmod\ (\mathbb{Q}^{\times})^2 \right)\\
	\left( u^2 - v^2 \right) \equiv \pm 1 \left( \bmod\ (\mathbb{Q}^{\times})^2 \right) 
	&\iff 
	uv \equiv \pm A \left( \bmod\ (\mathbb{Q}^{\times})^2 \right).
\end{align*}
Thus it is enough to look at the case when $\pm uv$ or 
$\pm \left( u^2 - v^2 \right)$ coincide with $\pm 1$ in 
$\alpha(E_A(\mathbb{Q}))$. The case when 
$uv \equiv \pm 1 \left( \bmod\ (\mathbb{Q}^{\times})^2 \right)$ 
is precisely Proposition \ref{sec5prop1}. The case when 
$ \left( u^2 - v^2 \right) \equiv 1 \left( \bmod\ (\mathbb{Q}^{\times})^2 \right)$, 
can also be applied with Proposition \ref{sec5prop1}. In particular, we can 
let $u_1 = u + v$ and $v_1 = u - v$ and consider 
\[ A_1 = u_1 v_1 \left( u_1^2 - v_1^2 \right). \]
Notice that $A_1 = 4A$. And so in this case, we apply Proposition \ref{sec5prop1} 
to $A_1$ where $u_1v_1 \equiv \pm 1 \left( \bmod\ (\mathbb{Q}^{\times})^2 \right)$.
Finally, we note that in any case, we get 
\[ 2\strut^{\rank(E_A)} \geq \dfrac{4 \cdot 2}{4} \text{ or } \dfrac{8 \cdot 1}{4}. \]
Hence, $\rank(E_A) \geq 1$.
\end{proof}


\section{Constructing CN elliptic curves with rank more than 1} 

The method of 2-descent reduces the computation of the rank of some elliptic 
curves to solving quartic Diophantine equations of the form 
$\mathcal{T}(b_1)$ and $\overline{\mathcal{T}}(\overline{b_1})$. Let 
$A = uv \left( u^2 - v^2 \right)$. From Theorem \ref{sec4thm1} we already 
know that the family of elliptic curves $E_A: y^2 = x^3 - A^2 x$ has rank at least 
1. But it was from the proof of Proposition \ref{sec5prop1} and Proposition 
\ref{sec5prop2} that we obtained a list of the quartic equations which are solvable. 
To get a subfamily of $E_A$ with higher rank, we try to solve quartic equations 
not on our list by imposing conditions on $u$ and $v$, whenever needed. This gives 
us elements of $\alpha(E_A(\mathbb{Q}))$ and 
$\overline{\alpha}(\overline{E_A}(\mathbb{Q}))$. The last step is to count the 
distinct elements of these multiplicative groups and then use equation 
\eqref{rankformula} to compute for the rank. 


\subsection{A subfamily from parametrization of conics}

Consider the quartic equation
\begin{equation}\label{subsec5.1}
	N^2 = v(u + v) \left[ M^4 - u^2 \left( u - v \right)^2 e^4 \right].
\end{equation}
When $e = 1$ and $M = u$, we get 
$N^2 = u^2 v^2 \left( 2u^2 + uv - v^2 \right)$. Let us choose $u$ and $v$ 
$w^2 = 2u^2 + uv - v^2$ has a solution in $\mathbb{Z}$. Equivalently, we 
can choose $t = \tfrac{u}{v} \in \mathbb{Q}$ is a solution to the quadratic 
equation $w_1^2 = 2t^2 + t - 1$. The existence of a nontrivial rational solution 
$(2, \pm 3)$ tells us that there are actually infinitely many rational numbers 
which satisfy this equation. Moreover, we can obtain a parametrization for the 
solution $t$:
\[ u = r^2 + s^2 \quad \text{and} \quad v = 2r^2 - s^2. \]
Here, we choose $r$ and $s$ to be relatively prime integers. This makes 
\[ A = -3r^2 \left( r^2 + s^2 \right)\left( r^2 - 2s^2 \right)\left( 2r^2 - s^2 \right) \]
This adds $\pm v(u + v)$ to $\alpha(E_A(\mathbb{Q}))$ with the 
corresponding quadruple to \eqref{subsec5.1}
\[(N, e, M) = \left( uvw, 1, u \right) = \left( 3rs\left( r^2 + s^2 \right) \left( 2r^2 - s^2 \right), 1, r^2 + s^2 \right).\] 
Because $\alpha(E_A(\mathbb{Q}))$ is a multiplicative group, $\pm v(u - v)$, 
$\pm u(u + v)$, and $\pm u(u - v)$ are also elements of 
$\alpha(E_A(\mathbb{Q}))$. We claim that we have a subfamily 
\[E_A^1\,/\,\mathbb{Q}: y^2 = x^3 - n^2 x, \text{ with }
	A = -3r^2 \left( r^2 + s^2 \right)\left( r^2 - 2s^2 \right)\left( 2r^2 - s^2 \right) \] 
of $E_A$ with rank at least 2. To do this, we prove that the 16 elements we 
have identified in $\alpha(E_n^1(\mathbb{Q}))$ are distinct. 

\begin{lemma}\label{lem1const2}
Let $u$ and $v$ be as above. For all but finitely many choices of relatively 
prime integers $r$ and $s$, $\pm 1, \pm A, \pm uv, \pm\left( u^2 - v^2 \right)$ 
define eight distinct elements in $\alpha(E_A^1(\mathbb{Q}))$.
\end{lemma}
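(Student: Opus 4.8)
The plan is to show that the eight quantities $\pm 1,\pm A,\pm uv,\pm(u^2-v^2)$ are pairwise distinct in $\mathbb{Q}^{\times}/(\mathbb{Q}^{\times})^2$ for all but finitely many coprime $(r,s)$, where $u=r^2+s^2$ and $v=2r^2-s^2$. By the equivalences already recorded in the proof of Proposition \ref{sec5prop2}, it suffices to rule out $uv\equiv\pm 1$ and $(u^2-v^2)\equiv\pm 1\pmod{(\mathbb{Q}^{\times})^2}$, since every other possible coincidence among the eight elements is equivalent to one of these two (for instance $uv\equiv A$ iff $u^2-v^2\equiv 1$, and $1\equiv A$ iff $uv\equiv u^2-v^2$, which after multiplying through is again covered). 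So I would reduce the lemma to: for all but finitely many coprime $(r,s)$, neither $uv$ nor $u^2-v^2$ is $\pm$ a square.

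First I would compute the two relevant quantities in terms of $r,s$:
\[
uv=(r^2+s^2)(2r^2-s^2),\qquad u^2-v^2=(u-v)(u+v)=(s^2-r^2)(3r^2)= -3r^2(r^2-s^2),
\]
so that $u^2-v^2\equiv -3(r^2-s^2)\pmod{(\mathbb{Q}^{\times})^2}$ once we drop the square factor $r^2$ (valid since $r\neq 0$, and $r=0$ is excluded as it forces $\gcd(u,v)\neq 1$). Now $u^2-v^2\equiv\pm 1$ would force $-3(r^2-s^2)=\pm\square$, i.e. the affine curve $\pm 3(r^2-s^2)=w^2$ (equivalently $\pm 3(1-(s/r)^2)=(w/r)^2$), which is a smooth conic; if it has a rational point it is parametrized by a rational function, giving at most a one-parameter subfamily, but more to the point one checks $-3(r^2-s^2)=w^2$ has no rational points (look modulo $3$ or consider signs: $-3(r^2-s^2)$ is a square forces $r^2<s^2$ and $3\mid w$, then descent), and $3(r^2-s^2)=w^2$ similarly fails, or if it does have points those $(r,s)$ lie on a thin set that we exclude. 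Likewise $uv\equiv\pm 1$ means $(r^2+s^2)(2r^2-s^2)=\pm\square$; homogenizing and setting $t=s/r$ this is the quartic $(1+t^2)(2-t^2)=\pm\square$ in $t$, which defines a curve of genus $1$ (an elliptic curve after the standard transformation, since the quartic has distinct roots), so by Mordell–Weil it has only finitely many rational points $t$, hence finitely many coprime $(r,s)$ up to scaling — that is, only finitely many excluded pairs.

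The last step is to assemble: outside the finite set of $(r,s)$ coming from the genus-$1$ condition $uv\equiv\pm 1$ and the (empty or thin) locus from $u^2-v^2\equiv\pm 1$, all eight signed values $\pm 1,\pm A,\pm uv,\pm(u^2-v^2)$ are distinct modulo squares. Combined with the quadruple $(N,e,M)=(3rs(r^2+s^2)(2r^2-s^2),1,r^2+s^2)$ solving \eqref{subsec5.1}, which via Proposition \ref{eltstopts} and multiplicativity of $\alpha(E_A^1(\mathbb{Q}))$ (Remark \ref{sec3rem1}) contributes $\pm v(u\pm v),\pm u(u\pm v)$, one then gets sixteen distinct elements — but the lemma as stated only asks for the eight, so the proof stops here.

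I expect the main obstacle to be the $uv\equiv\pm 1$ case: one must correctly identify the quartic $(1+t^2)(2-t^2)=\pm\square$ as defining a genus-$1$ curve and invoke finiteness of its rational points (rather than hoping for emptiness, which is false here — $t=2$ already gives $(1+4)(2-4)=-10$, not a square, but one should check small cases and then cite Faltings/Mordell–Weil for the curve), whereas the $u^2-v^2\equiv\pm 1$ case is an easy conic/congruence argument. Care is also needed to confirm that all other pairwise coincidences among the eight elements genuinely reduce to these two via the displayed equivalences in Proposition \ref{sec5prop2}, and that the square factor $r^2$ in $u^2-v^2$ may be discarded (it can, since it is a nonzero square).
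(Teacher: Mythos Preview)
Your reduction to the two cases $uv\equiv\pm 1$ and $u^2-v^2\equiv\pm 1\pmod{(\mathbb{Q}^\times)^2}$ is correct and matches the paper's approach. However, both cases contain genuine errors.

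First, you miscompute $u-v$. With $u=r^2+s^2$ and $v=2r^2-s^2$ one has $u-v=2s^2-r^2$, not $s^2-r^2$; hence $u^2-v^2=3r^2(2s^2-r^2)$, and the relevant condition is $3(2s^2-r^2)=\pm w^2$. The paper disposes of this by observing $3\mid w$ forces $2s^2\equiv r^2\pmod 3$, which for coprime $r,s$ has no solution. Your version $3(r^2-s^2)=\pm w^2$ is a different conic, and it \emph{does} have infinitely many coprime solutions (e.g.\ $(r,s,w)=(2,1,3)$ and its parametrized family), so your hedge ``thin set that we exclude'' would leave infinitely many bad pairs and the argument collapses.

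Second, for $uv\equiv\pm 1$ you correctly arrive at a genus-$1$ quartic, but then write ``so by Mordell--Weil it has only finitely many rational points'' and later ``cite Faltings/Mordell--Weil''. Neither applies: Mordell--Weil gives finite generation, not finiteness, and Faltings requires genus $\geq 2$. A genus-$1$ curve can have infinitely many rational points. The paper handles this honestly: one sign is killed by a mod-$4$ obstruction, and for the remaining sign one must actually compute that the Jacobian of the quartic has rank $0$, whence only finitely many rational points. Without that rank computation (or an equivalent argument), your proof has a gap at exactly the step you flagged as the ``main obstacle''.
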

\begin{proof}
We already know from Section 4 that $\pm 1$, $\pm A$ are distinct elements 
of $\alpha(E_A^1(\mathbb{Q}))$ corresponding to the 2-torsion points of $E_A^1$. 
First we note that it cannot happen that 
$uv \equiv \pm u^2 - v^2 \left( \bmod\ (\mathbb{Q}^{\times})^2 \right)$, 
otherwise $A$ would be a square or the negative of a square, which is impossible 
by Proposition \ref{sec4prop4}.
Moreover, we have seen that 
\begin{align*}
	 uv \equiv \pm 1 \left( \bmod\ (\mathbb{Q}^{\times})^2 \right) 
	&\iff 
	u^2 - v^2 \equiv \pm A \left( \bmod\ (\mathbb{Q}^{\times})^2 \right)\\
	u^2 - v^2 \equiv \pm 1 \left( \bmod\ (\mathbb{Q}^{\times})^2 \right) 
	&\iff 
	uv \equiv \pm A \left( \bmod\ (\mathbb{Q}^{\times})^2 \right).
\end{align*}
So it is enough to show that  
$uv \equiv \pm 1 \left( \bmod\ (\mathbb{Q}^{\times})^2 \right)$ or 
$u^2 - v^2 \equiv \pm 1 \left( \bmod\ (\mathbb{Q}^{\times})^2 \right)$ 
are true for only finitely many relatively prime integers $r$ and $s$. We discuss 
the latter first. Suppose the congruence holds. In terms of $r$ and $s$, we have 
$3r^2 \left( 2s^2 - r^2 \right) \equiv \pm 1 \left( \bmod\ (\mathbb{Q}^{\times})^2 \right)$, 
which implies that $3\left( 2s^2 - r^2 \right) = \pm y^2$, for some nonzero 
$y \in \mathbb{Q}$. This tells us that $3 \mid \pm \left( 2s^2 - r^2 \right)$. 
However, by plugging in all possible values modulo 3, one can see that the 
congruence $2s^2 - r^2 \equiv 0 \pmod 3$ only holds if $3 \mid \gcd(r,s)$. 
This is a contradiction.

Finally we discuss the former case. In terms of $r$ and $s$, we get the 
Diophantine equation $\pm y^2 = - s^4 + r^2 s^2 + 2r^4$. When the 
left hand side is positive, we can reduce the equation modulo 4 to get a 
contradiction. All that is left is to check the integer solutions of 
$y^2 = s^4 - r^2 s^2 + 2r^4$. We move the discussion to $\mathbb{Q}$ 
and study the quartic curve $y^2 = 2x^4 - x^2 + 1$ which has a rational 
point $(0, \pm 1)$. This tells us that this curve is also birationally equivalent 
to an elliptic curve (in Weierstrass form). We can compute that the Jacobian 
of this elliptic curve has rank 0. So there can only be finitely many 
rational pairs $(x,y)$ on the quartic and hence finitely many pairs $r$ and $s$ 
satisfying the congruence 
$uv \equiv -1 \left( \bmod\ (\mathbb{Q}^{\times})^2 \right)$. This completes 
the proof of the lemma. 
\end{proof}

\begin{lemma}\label{lem2const2}
Let $\mathcal{S} := \{ \pm u(u + v), \pm u(u - v), \pm v(u + v), \pm v(u - v) \}$, 
where $u$ and $v$ are parametrized in terms of $r$ and $s$ as above. Then for all 
but finitely many relatively prime integers $m$ and $n$, $\mathcal{S}$ 
defines eight distinct elements in $\alpha(E_A^1(\mathbb{Q}))$.
\end{lemma}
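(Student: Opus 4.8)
The plan is to reduce everything to facts already proved in this section, so that no new Diophantine curve has to be studied. First I would normalize the eight classes in $\mathcal{S}$ modulo $(\mathbb{Q}^{\times})^{2}$ using the identities $u+v=3r^{2}$ and $u-v=2s^{2}-r^{2}$: since $u+v$ is $3$ times a square, $u(u+v)\equiv 3u$ and $v(u+v)\equiv 3v$, whereas $u(u-v)$ and $v(u-v)$ stay as they are. So the eight classes to be separated in $\alpha(E_{A}^{1}(\mathbb{Q}))$ are $\pm 3u,\ \pm 3v,\ \pm u(u-v),\ \pm v(u-v)$, and there are $\binom{8}{2}=28$ comparisons to run.

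I would then sort these $28$ comparisons into four groups. The four pairs $\{x,-x\}$ are automatically distinct because $-1\notin(\mathbb{Q}^{\times})^{2}$. Comparing $\pm 3u$ with $\pm u(u-v)$, or $\pm 3v$ with $\pm v(u-v)$ (eight pairs), reduces to asking whether $\pm 3(u-v)=\pm 3(2s^{2}-r^{2})$ is a square; but $\gcd(r,s)=1$ forces $3\nmid 2s^{2}-r^{2}$ (a one-line check modulo $3$), so the $3$-adic valuation of $\pm 3(2s^{2}-r^{2})$ is odd and these pairs are always distinct. Comparing $\pm 3u$ with $\pm v(u-v)$, or $\pm 3v$ with $\pm u(u-v)$ (eight pairs), reduces to whether $\pm 3uv(u-v)$ is a square; since $u^{2}-v^{2}=(u-v)(u+v)=3r^{2}(u-v)$, that quantity equals $\pm uv(u^{2}-v^{2})/r^{2}=\pm A/r^{2}$, so the question is whether $\pm A$ is a square — which Proposition \ref{sec4prop4} forbids once one notes $A=uv(u^{2}-v^{2})\neq 0$ whenever $r\neq 0$ and $\gcd(r,s)=1$ (then none of $u$, $v$, $u-v$, $u+v$ vanishes). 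The remaining eight comparisons — $\pm 3u$ against $\pm 3v$, and $\pm u(u-v)$ against $\pm v(u-v)$ — all reduce to whether $\pm uv$ is a square, i.e.\ to $uv\equiv\pm 1\ (\bmod\ (\mathbb{Q}^{\times})^{2})$.

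The last step is to invoke Lemma \ref{lem1const2}: written in terms of $r$ and $s$, the condition $uv\equiv\pm 1\ (\bmod\ (\mathbb{Q}^{\times})^{2})$ is exactly one of the two congruences excluded in its proof (disposed of there by a reduction modulo $4$ for one sign and by the rank-$0$ Jacobian of the quartic $y^{2}=2x^{4}-x^{2}+1$ for the other), hence it holds for only finitely many coprime pairs $(r,s)$. Assembling the four groups, every comparison is strict except possibly those in the last group, which fail for only finitely many $(r,s)$; after also discarding the degenerate case $r=0$ (where $A=0$), this proves the lemma. The main difficulty here is not conceptual but organizational: one must work through all $28$ comparisons without sign slips and check that each lands in one of the four groups, the key point being that the only two conditions that look genuinely new — "$\pm uv$ is a square" and "$\pm A$ is a square" — are precisely the ones already settled by Lemma \ref{lem1const2} and Proposition \ref{sec4prop4} respectively.
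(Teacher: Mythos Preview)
Your proof is correct and follows essentially the same case-by-case strategy as the paper: both of you reduce the $28$ pairwise comparisons to the three conditions ``$\pm A$ is a square'' (handled by Proposition~\ref{sec4prop4}), ``$\pm uv$ is a square'' (handled by Lemma~\ref{lem1const2}), and ``$\pm(u^{2}-v^{2})$ is a square''. The only substantive difference is in the last case: you dispose of $\pm 3(2s^{2}-r^{2})$ being a square via the $3$-adic valuation (re-using the mod~$3$ observation from the proof of Lemma~\ref{lem1const2}), whereas the paper argues modulo~$4$; your normalization $u(u+v)\equiv 3u$, $v(u+v)\equiv 3v$ via $u+v=3r^{2}$ is a clean way to organize the comparisons but does not change the underlying argument.
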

\begin{proof}
We proceed by cases. Each of the following congruences below 
\[\begin{cases}
	\begin{aligned}
		u(u + v) &\equiv \pm v(u - v) \left( \bmod\ (\mathbb{Q}^{\times})^2 \right)\\
		v(u + v) &\equiv \pm u(u - v) \left( \bmod\ (\mathbb{Q}^{\times})^2 \right)
	\end{aligned}
\end{cases}\]
suggests that $n$ is a square or the negative of a square which cannot be the 
case according to Proposition \ref{sec4prop4}. This is a contradiction. The next 
four congruences 
\[\begin{cases}
	\begin{aligned}
		u(u + v) &\equiv \pm v(u + v) \left( \bmod\ (\mathbb{Q}^{\times})^2 \right)\\
		u(u - v) &\equiv \pm v(u - v) \left( \bmod\ (\mathbb{Q}^{\times})^2 \right)
	\end{aligned}
\end{cases}\]
imply that $uv \equiv \pm 1 \left( \bmod\ (\mathbb{Q}^{\times})^2 \right)$.  
We already know from Lemma \ref{lem1const2} that this is possible only for 
finitely many choices of relatively prime integers $r$ and $s$. The last four 
congruences 
\[\begin{cases}
	\begin{aligned}
		u(u + v) &\equiv \pm u(u - v) \left( \bmod\ (\mathbb{Q}^{\times})^2 \right)\\
		v(u + v) &\equiv \pm v(u - v) \left( \bmod\ (\mathbb{Q}^{\times})^2 \right)
	\end{aligned}
\end{cases}\]
give rise to $\pm 3 \equiv 2r^2 - s^2$. Modulo 4, we see that there is no pair of 
relatively prime integers $(r,s)$ that can satisfy this congruence. This is a contradiction. 
Having discussed all possible cases, we have completed the proof of the lemma. 
\end{proof}

\begin{lemma}\label{lem3const2}
Let $\mathcal{S}$ be as in Lemma \ref{lem2const2}. For all but finitely many 
choices of relatively prime integers $r$ and $s$, each element of $\mathcal{S}$ 
is distinct from $\pm 1$ or $\pm A$ in $\alpha(E_A^1(\mathbb{Q}))$.
\end{lemma}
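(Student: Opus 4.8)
The plan is to reduce the lemma to finitely many concrete Diophantine conditions on the coprime pair $(r,s)$. Write $u=r^2+s^2$ and $v=2r^2-s^2$, so that $u+v=3r^2$ and $u-v=2s^2-r^2$; then, modulo squares, $u(u+v)\equiv 3(r^2+s^2)$, $v(u+v)\equiv 3(2r^2-s^2)$, $u(u-v)\equiv (r^2+s^2)(2s^2-r^2)$, $v(u-v)\equiv (2r^2-s^2)(2s^2-r^2)$, and $A\equiv 3(r^2+s^2)(2r^2-s^2)(2s^2-r^2)$. An element $\sigma w$ of $\mathcal{S}$ (with $\sigma=\pm1$ and $w\in\{u(u+v),v(u+v),u(u-v),v(u-v)\}$) coincides with some $\tau\in\{1,-1,A,-A\}$ in $\alpha(E_A^1(\mathbb{Q}))$ precisely when $w\equiv\sigma\tau$, and $\sigma\tau$ again runs through $\{1,-1,A,-A\}$; so it is enough to test the four representatives $w$ against the four targets, that is, sixteen congruences.

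First I would dispose of eight of them. Cancelling from both sides the common factor visible once the expression for $A$ is substituted, each of these simplifies to the statement that $\pm3(r^2+s^2)$ or $\pm3(2r^2-s^2)$ is a rational square; every one is impossible for all coprime $(r,s)$, either because the sign is wrong or because a square on the right would force $3\mid r^2+s^2$ or $3\mid 2r^2-s^2$, both excluded by the elementary mod-$3$ argument already used in Lemma \ref{lem1const2}. For the remaining eight I would first record that the pairs $\{r^2+s^2,\,2s^2-r^2\}$ and $\{2r^2-s^2,\,2s^2-r^2\}$ are coprime (a common prime would divide $3r^2$ and $3s^2$, hence equal $3$, which is again excluded). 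After cancelling a common factor, these eight reduce to exactly four conditions: (a) $(r^2+s^2)(2s^2-r^2)$ is a square; (b) it is the negative of a square; (c) $(2r^2-s^2)(2s^2-r^2)$ is a square; (d) it is the negative of a square.

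For (a) and (d) I would use coprimality: a product of two coprime integers equal to $\pm$ a square forces each factor to be $\pm$ a square, and eliminating the ratio $r^2/s^2$ between the two resulting equations yields a conic of the form $X^2-2Y^2=\pm3$, which has no rational point by a congruence obstruction modulo $8$ (reduce a hypothetical primitive integral solution); hence (a) and (d) contribute nothing. For (b) and (c) I would pass to the ratio $t=s/r$ (respectively $t=r/s$): condition (b) becomes the solvability in $\mathbb{Q}$ of $y^2=-2t^4-t^2+1$, and (c) that of $y^2=-2t^4+5t^2-2$. Each of these quartics has an obvious rational point, namely $(t,y)=(0,\pm1)$ and $(1,\pm1)$, hence is birationally equivalent over $\mathbb{Q}$ to an elliptic curve; one then computes, just as in the proof of Lemma \ref{lem1const2}, that the corresponding Jacobian has rank $0$, so the quartic has only finitely many rational points, which bounds the number of admissible values of $t$, hence of coprime pairs $(r,s)$. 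Combining the four cases with the eight easy congruences gives the lemma, the few degenerate pairs (those with $rs=0$, where $A=0$ or a factor vanishes) being absorbed into the finite exceptional set.

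The crux, and the only step that is not pure bookkeeping, is the rank-$0$ verification for the two elliptic curves attached to cases (b) and (c); this is a finite $2$-descent of exactly the same nature as the one already invoked in Lemma \ref{lem1const2}, and the truth of the lemma ultimately rests on it. Everything else is a matter of tracking the sixteen congruences and applying the mod-$3$, mod-$8$, and coprimality observations above.
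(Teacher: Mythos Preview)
Your argument follows the same overall strategy as the paper's: reduce to a finite list of square-class congruences, dispatch those involving the factor $u+v=3r^2$ by a mod-$3$ obstruction (the paper uses mod $4$, together with the mod-$3$ step already in Lemma~\ref{lem1const2}), and handle the two genuinely delicate cases --- your (b) and (c) --- via quartics whose Jacobians have rank $0$. The paper does exactly this: case (b) by reference back to Lemma~\ref{lem1const2} with $r,s$ swapped, and case (c) explicitly with the quartic $y^2=-2x^4+5x^2-2$. The only tactical difference is your treatment of (a) and (d): rather than split the product into coprime factors and pass to a conic, the paper simply reduces the quartic equation $y^2=(r^2+s^2)(2s^2-r^2)$ (respectively $-y^2=(2r^2-s^2)(2s^2-r^2)$) modulo $4$ and reads off a contradiction directly. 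Both routes work, though the paper's is a line shorter.

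One small slip worth correcting: in case (a) your elimination does not give $X^2-2Y^2=\pm3$. From $r^2+s^2=X^2$ and $2s^2-r^2=Y^2$ one obtains $X^2+Y^2=3s^2$, i.e.\ the conic $X^2+Y^2=3$ over $\mathbb{Q}$; the conic $X^2-2Y^2=3$ arises only in case (d). This is harmless for the conclusion, since $X^2+Y^2=3$ is equally insoluble (mod $4$ or mod $3$), but the sentence as written conflates the two cases.
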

\begin{proof}
Notice that if $\alpha \in \mathcal{S}$ is equal to $\pm 1$ in 
$\alpha(E_A^1(\mathbb{Q}))$, then $\tfrac{A}{\alpha} \in \mathcal{S}$ is equal 
to $\pm A$ in $\alpha(E_A^1(\mathbb{Q}))$. So it is enough to show that each 
element of $\mathcal{S}$ is distinct from $\pm 1$ to prove the statement. Now 
observe that from each congruence
\[\begin{cases}
	\begin{aligned}
		u(u - v) \equiv \pm 1 \left( \bmod\ (\mathbb{Q}^{\times})^2 \right)\\
		v(u - v) \equiv \pm 1 \left( \bmod\ (\mathbb{Q}^{\times})^2 \right)
	\end{aligned}
\end{cases}\]
arises a Diophantine equation 
\[\begin{cases}
	\begin{aligned}
		\pm y^2 &= -r^4 + r^2 s^2 + 2s^4\\
		\pm y^2 &= -2r^4 + 5r^2 s^2 - 2s^4,
	\end{aligned}
\end{cases}\]
respectively. We have already dealt with equations similar to the first two in 
Lemma \ref{lem1const2} ($r$ and $s$ are switched). These will also lead to a 
contradiction. We direct our attention to the other two: 
$\pm y^2 = -2r^4 + 5r^2 s^2 - 2s^4$. When the left hand side 
is negative, we can reduce the equation modulo 4 to get a contradiction. 
All that is left is to check the integer solutions of 
$y^2 = -2r^4 + 5r^2 s^2 - 2s^4$. Using the same technique as before, we 
move the setting to $\mathbb{Q}$ and study the quartic curve 
$y^2 = -2x^4 + 5x^2 - 2$. This curve has a rational point $(1, \pm 1)$, which 
tells us that it is birationally equivalent to an elliptic curve (in Weierstrass form). 
We can compute that the Jacobian of this elliptic curve has rank 0. Hence, there 
can only be finitely many rational pairs $(x,y)$ on the quartic and consequently 
only finitely many pairs $(r,s)$ of relatively prime integers satisfying the 
congruence $v(u - v) \equiv 1 \left( \bmod\ (\mathbb{Q}^{\times})^2 \right)$. 

Finally, from each congruence
\[\begin{cases}
	\begin{aligned}
		u(u + v) \equiv \pm 1 \left( \bmod\ (\mathbb{Q}^{\times})^2 \right)\\
		v(u + v) \equiv \pm 1 \left( \bmod\ (\mathbb{Q}^{\times})^2 \right)
	\end{aligned}
\end{cases}\]
arises a Diophantine equation
\[\begin{cases}
	\begin{aligned}
		\pm y^2 &= 3r^2 + 3s^2\\
		\pm y^2 &= 6r^2 - 3s^2,
	\end{aligned}
\end{cases}\]
respectively. We have already dealt with equations similar to the last two in 
Lemma \ref{lem1const2} (switching $r$ and $s$). These will also lead to a 
contradiction. Moving on to $\pm y^2 = 3r^2 + 3s^2$, it is clear that when 
the left hand side is negative, there are no solutions to the equation. On the 
other hand, when the left hand side is positive, one can reduce the equations 
modulo 4 to obtain a contradiction. We have discussed all possible cases so 
this ends the proof for the lemma.
\end{proof}

\begin{lemma}\label{lem4const2}
Let $\mathcal{S}$ be as in Lemma \ref{lem2const2}. For all but finitely many 
choices of relatively prime integers $r$ and $s$, each element of $\mathcal{S}$ 
is distinct from $\pm uv$ or $\pm u^2 - v^2$ in $\alpha(E_A^1(\mathbb{Q}))$.
\end{lemma}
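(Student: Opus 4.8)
The strategy mirrors the previous three lemmas: I would go through each of the possible equalities between an element of $\mathcal{S}$ and one of $\pm uv$, $\pm(u^2-v^2)$ in $\alpha(E_A^1(\mathbb{Q}))$, translate the congruence into a Diophantine equation in $r$ and $s$ via the parametrization $u = r^2+s^2$, $v = 2r^2-s^2$, and then dispose of each equation either by a parity/mod-4 obstruction or by exhibiting a quartic curve whose Jacobian has rank $0$, so that only finitely many coprime $(r,s)$ can occur. First I would reduce the bookkeeping: since $\alpha(E_A^1(\mathbb{Q}))$ is a multiplicative group and $uv \cdot (u^2-v^2) \equiv A \pmod{(\mathbb{Q}^\times)^2}$, an equality $\beta \equiv uv$ with $\beta \in \mathcal{S}$ is equivalent to $\tfrac{A}{\beta} \equiv u^2-v^2$, and $\tfrac{A}{\beta}$ again lies in $\mathcal{S}$ (since $A = uv(u-v)(u+v)$ and $\mathcal{S}$ consists of the products of two of the four factors $u,v,u-v,u+v$ — up to squares). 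Hence it suffices to rule out $\beta \equiv uv \pmod{(\mathbb{Q}^\times)^2}$ for each of the eight $\beta \in \mathcal{S}$, halving the work.

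\textbf{Key steps.} For the eight congruences $\beta \equiv uv$: note $uv = (r^2+s^2)(2r^2-s^2)$, while the typical $\beta$ is a product like $u(u+v) = (r^2+s^2)(3r^2)$ or $v(u-v) = (2r^2-s^2)(3s^2-r^2)$, etc. Dividing out the common factor (where there is one) and clearing squares, each congruence $\beta \equiv uv$ collapses to a statement of the form ``a binary quartic form in $r,s$ is $\pm$ a square''. I would list these quartics explicitly; for instance $u(u+v)\equiv uv$ forces $3r^2 \equiv 2r^2 - s^2$, i.e.\ $\pm y^2 = r^2 + s^2$ (sign issues and mod-$4$ kill this), and $v(u+v)\equiv uv$ forces $3r^2(2r^2-s^2)(r^2+s^2)\equiv (r^2+s^2)(2r^2-s^2)$, i.e.\ $3r^2 \equiv 1$, impossible. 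The genuinely new quartics — those with no linear factor to cancel, coming from cross terms like $u(u-v)\equiv uv$ or $v(u+v)\equiv uv$ — give curves $\pm y^2 = f(r,s)$ with $\deg f = 4$; for each I would pass to $\mathbb{Q}$, check that $y^2 = f(x,1)$ has an obvious rational point so it is birational to an elliptic curve in Weierstrass form, and invoke a rank-$0$ computation of the Jacobian exactly as in Lemmas \ref{lem1const2} and \ref{lem3const2}, concluding finiteness of the rational points and hence of the eligible $(r,s)$. The negative-sign cases should be handled first by a congruence (mod $4$, or sign, or mod $3$) obstruction wherever possible, leaving only the positive-sign quartics for the Jacobian argument.

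\textbf{Main obstacle.} The routine part is the parity and mod-$p$ arguments; the real work is identifying, for each of the remaining quartics $y^2 = f(x,1)$, a rational point (to guarantee birationality to an elliptic curve) and then verifying that the associated elliptic curve has Mordell--Weil rank $0$. The danger is that one of these quartics turns out to have \emph{positive} rank, which would break the argument and force imposing an extra explicit condition on $(r,s)$ to excise that locus — this is the step I expect to be delicate and where the ``all but finitely many'' conclusion genuinely rests. I would also take care that the congruences among elements of $\mathcal{S}$ and $\pm uv, \pm(u^2-v^2)$ which reduce to ones \emph{already} treated in Lemma \ref{lem1const2} (namely those equivalent to $uv \equiv \pm 1$ or $u^2-v^2 \equiv \pm 1$) are simply cited rather than redone, keeping the proof to its essential new content. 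Once all cases yield either an immediate contradiction or a rank-$0$ Jacobian, the lemma follows, and combined with Lemmas \ref{lem1const2}–\ref{lem3const2} this shows the sixteen elements $\{\pm 1,\pm A,\pm uv,\pm(u^2-v^2)\}\cup\mathcal{S}$ are distinct in $\alpha(E_A^1(\mathbb{Q}))$ for all but finitely many coprime $(r,s)$, giving $\#\alpha(E_A^1(\mathbb{Q}))\ge 16$ and thus $\operatorname{rank}(E_A^1)\ge 2$.
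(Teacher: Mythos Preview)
Your reduction to checking only $\beta \equiv \pm uv$ (via the $A/\beta$ trick) matches the paper exactly. Where you diverge is in the second step: you plan to substitute $u=r^2+s^2$, $v=2r^2-s^2$ into each congruence, produce a fresh quartic in $r,s$, and then run a new sign/mod-$4$/rank-$0$-Jacobian analysis for each. The paper instead observes that no new analysis is needed at all: for each $\beta\in\mathcal{S}$, the congruence $\beta\equiv\pm uv\pmod{(\mathbb{Q}^\times)^2}$ is equivalent, after multiplying both sides by $uv$ and clearing the resulting square, to $\beta'\equiv\pm 1$ for some other $\beta'\in\mathcal{S}$. Concretely,
\[
u(u\pm v)\equiv uv \iff v(u\pm v)\equiv 1,\qquad v(u\pm v)\equiv uv \iff u(u\pm v)\equiv 1,
\]
and similarly with signs. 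But the eight congruences $\beta'\equiv\pm 1$ with $\beta'\in\mathcal{S}$ are precisely what Lemma~\ref{lem3const2} already handled. So the entire lemma collapses to a citation.

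Your route would still work --- the ``new'' quartics you anticipate are, after the algebra, identical to the ones in Lemma~\ref{lem3const2}, so the Jacobian computations would succeed --- but you would be redoing that lemma without noticing. Your stated ``main obstacle'' (that some quartic might turn out to have positive rank) is therefore a phantom: it cannot happen here unless it already happened in Lemma~\ref{lem3const2}. The moral is that before substituting the parametrization, it pays to exploit the group structure of $\alpha(E_A^1(\mathbb{Q}))$ one more time; the reductions you flagged as going back to Lemma~\ref{lem1const2} actually go back to Lemma~\ref{lem3const2}, and they cover \emph{all} cases, not just some.
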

\begin{proof}
Similar to Lemma \ref{lem3const2}, if $\alpha \in \mathcal{S}$ is equal to 
$\pm uv$ in $\alpha(E_A^1(\mathbb{Q}))$, then $\tfrac{A}{\alpha} \in \mathcal{S}$ 
is equal to $\pm \left( u^2 - v^2 \right)$ in $\alpha(E_n^1(\mathbb{Q}))$. So it 
is enough to claim that any element of $\mathcal{S}$ is distinct from $\pm uv$ 
to prove the statement. Notice the following equivalence:
\[\begin{cases}
	\begin{aligned}
		u(u + v) \equiv \pm uv \left( \bmod\ (\mathbb{Q}^{\times})^2 \right) &\iff v(u + v) \equiv \pm 1 \left( \bmod\ (\mathbb{Q}^{\times})^2 \right)\\
		u(u - v) \equiv \pm uv \left( \bmod\ (\mathbb{Q}^{\times})^2 \right) &\iff v(u - v) \equiv \pm 1 \left( \bmod\ (\mathbb{Q}^{\times})^2 \right)\\
		v(u + v) \equiv \pm uv \left( \bmod\ (\mathbb{Q}^{\times})^2 \right) &\iff u(u + v) \equiv \pm 1 \left( \bmod\ (\mathbb{Q}^{\times})^2 \right)\\
		v(u - v) \equiv \pm uv \left( \bmod\ (\mathbb{Q}^{\times})^2 \right) &\iff u(u - v) \equiv \pm 1 \left( \bmod\ (\mathbb{Q}^{\times})^2 \right).
	\end{aligned}
\end{cases}\]
We already know from Lemma \ref{lem2const2} that only finitely many choices 
of relatively prime integers $r$ and $s$ can satisfy any of these congruences. 
\end{proof}
These previous lemmas show that $\#\alpha(E_A^1(\mathbb{Q})) \geq 16$. So we 
obtain a subfamily 
\[E_{A}^1\,/\,\mathbb{Q}: y^2 = x^3 - n^2 x, \text{ with }
	A = -3r^2 \left( r^2 + s^2 \right)\left( r^2 - 2s^2 \right)\left( 2r^2 - s^2 \right) \] 
of $E_A$ with rank at least 2. The following is a table which shows the rank of 
$E_A^1$ for various pairs of relatively prime integers $(r, s)$. 

\begin{table}[H]
\renewcommand{\arraystretch}{1.1}
\begin{tabular}{| c | c | c |}
\hline
$(r,s)$ & $A$ & $\rank(E_A^1)$\\ \hline
$(1,2)$ & $-210$ & 2\\ \hline
$(1,3)$ & $-3570$ & 2\\ \hline
$(3,4)$ & 31050 & 2\\ \hline
$(1,4)$ & $-22134$ & 3\\ \hline
$(5,6)$ & 3010350 & 3\\ \hline
$(4,7)$ & $-4349280$ & 3\\ \hline
$(5,7)$ & 405150 & 3\\ \hline
$(6,7)$ & 13090680 & 3\\ \hline
$(1,8)$ & $-1535430$ & 3\\ \hline
$(4,9)$ & $-33309024$ & 3\\ \hline
$(4,11)$ & $-132269664$ & 3\\ \hline
$(3,5)$ & $-263466$ & 4\\ \hline
$(4,5)$ & 468384 & 4\\ \hline
$(1,9)$ & $-3128874$ & 4\\ \hline
$(5,11)$ & $-168706650$ & 4\\ \hline
$(5,13)$ & $-541943850$ & 4\\ \hline
$(12,13)$ & 3121596576 & 4\\ \hline
$(13,16)$ & $6060449850$ & 4\\ \hline
$(1,40)$ & $-24552945606$ & 5\\ \hline
\end{tabular}
\caption{Rank of $E_A^1$ for some values of $r$ and $s$}
\end{table}


\subsection{A subfamily from an elliptic curve with positive rank}

We get another subfamily of $E_A$ by working on the previously obtained 
subfamily $E_A^1$. By varying $r$ and $s$ in $E_A^1$, we make an observation 
that many curves with rank 3 appear when $r$ is divisible by 4. Let $r = 4r_1$. 
Then we have 
\[ A = -96r_1^2 \left( 8r_1^2 - s^2 \right)\left( 16r_1^2 + s^2 \right)\left( 32r_1^2 - s^2 \right). \]
Consider the quartic equation 
\begin{equation}\label{subsec5.2}
	N^2 = 16r_1^2\left( 8r_1^2 - s^2 \right) \left( 16r_1^2 + s^2 \right) \left[ M^4 - 6^2\left( 32r_1^2 - s^2 \right)^2 e^4 \right].
\end{equation}
When $e = 1$ and $M = 12r_1$ we get 
$N^2 = 576r_1^2 \left( 8r_1^2 - s^2 \right)^2 \left( 16r_1^2 + s^2 \right) \left( -56r_1^2 + s^2 \right)$. 
Let us choose $r_1$ and $s$ so that 
$y^2 = \left( 16r_1^2 + s^2 \right) \left( -56r_1^2 + s^2 \right)$ has a 
solution in $\mathbb{Z}$. Equivalently, we choose 
$t = \tfrac{r_1}{s} \in \mathbb{Q}$ so that $y_1^2 = -896t^4 - 40t^2 + 1$ 
has a solution. This existence of a nontrivial rational solution 
$\left( \tfrac{2}{15}, \tfrac{17}{225} \right)$ tells us that this quartic curve 
is birationally equivalent to an elliptic curve (in Weierstrass form). Through 
SAGE we can compute the Jacobian of this elliptic curve and check that this 
curve has positive rank. This assures us the infinitude of rational numbers 
$t$ satisfying $y_1^2 = -896t^4 - 40t^2 + 1$. Consequently, we get 
infinitely many pairs of relatively prime integers $(r_1, s)$ satisfying the 
Diophantine equation 
$y^2 = \left( 16r_1^2 + s^2 \right) \left( -56r_1^2 + s^2 \right)$. 

If we screen our choice of $r_1$ and $s$ as above, we get that 
$16r_1^2 \left( 8r_1^2 - s^2 \right) \left( 16r_1^2 + s^2 \right)$ is an 
element of $\alpha(E_A^1(\mathbb{Q}))$ and the corresponding quadruple 
to \eqref{subsec5.1} is given by 
\[ (b_1, N, e, M) = \left( 16r_1^2 \left( 8r_1^2 - s^2 \right) \left( 16r_1^2 + s^2 \right), 24r_1\left( 8r_1^2 - s^2 \right)y, 1, 12r_1 \right) \] 
or (up to squares) 
\[ (b_1, N, e, M) = \left( \left( 8r_1^2 - s^2 \right) \left( 16r_1^2 + s^2 \right), 6r_1\left( 8r_1^2 - s^2 \right)y, 1, 192r_1^2 \right) \]  
Because $\alpha(E_A^1(\mathbb{Q}))$ is a multiplicative group, we can 
obtain a partial list of elements of $\alpha(E_A^1(\mathbb{Q}))$.

\begin{footnotesize}
\begin{table}[H]
\renewcommand{\arraystretch}{1.2}
\begin{tabular}{c | c | c}
In terms of $u$ and $v$ & In terms of $r_1$ and $s$ & New elements\\ \hline 
$\pm 1$ & $\pm 1$ & $\pm \left( 8r_1^2 - s^2 \right) \left( 16r_1^2 + s^2 \right)$\\
$\pm uv \left( u^2 - v^2 \right)$ & $\mp 6r_1^2 \left( 8r_1^2 - s^2 \right)\left( 16r_1^2 + s^2 \right)\left( 32r_1^2 - s^2 \right)$ &  $\mp 6 \left( 32r_1^2 - s^2 \right)$\\
$\pm uv$ & $\pm \left( 32r_1^2 - s^2 \right) \left( 16r_1^2 + s^2 \right)$ & $\pm \left( 8r_1^2 - s^2 \right) \left( 32r_1^2 - s^2 \right)$\\
$\pm \left( u^2 - v^2 \right)$ & $\mp 6 \left( 16r_1^2 + s^2 \right)$ & $\mp 6 \left( 8r_1^2 - s^2 \right)$\\
$\pm u(u + v)$ & $\pm 3 \left( 16r_1^2 + s^2 \right)$ & $ \pm 3 \left( 8r_1^2 - s^2 \right)$\\
$\pm v(u + v)$ & $\pm 3 \left( 32r_1^2 - s^2 \right)$ & $\pm 3 \left( 8r_1^2 - s^2 \right) \left( 16r_1^2 + s^2 \right) \left( 32r_1^2 - s^2 \right)$\\
$\pm u(u - v) $ & $\mp 2 \left( 8r_1^2 - s^2 \right) \left( 16r_1^2 + s^2 \right)$ & $\mp 2$\\
$\pm v(u - v)$ & $\mp 2 \left( 8r_1^2 - s^2 \right) \left( 32r_1^2 - s^2 \right)$ & $\mp 2 \left( 16r_1^2 + s^2 \right) \left( 32r_1^2 - s^2 \right)$
\end{tabular}
\caption{New elements (up to squares) of $\alpha(E_A^1(\mathbb{Q}))$ obtained 
after multiplying $\left( 8r_1^2 - s^2 \right) \left( 16r_1^2 + s^2 \right)$ to known 
elements of $\alpha(E_A^1(\mathbb{Q}))$ (entries of the second column)}
\label{table2}
\end{table}
\end{footnotesize}

We claim that the subfamily
\[E_{A}^2\,/\,\mathbb{Q}: y^2 = x^3 - n^2 x, \text{ with }
	A = -96r_1^2 \left( 8r_1^2 - s^2 \right)\left( 16r_1^2 + s^2 \right)\left( 32r_1^2 - s^2 \right) \] 
of $E_A$ has rank at least 3 when $r_1$ and $s$ are taken so that the 
following Diophantine equation is satisfied:
\[ y^2 = \left( 16r_1^2 + s^2 \right) \left( -56r_1^2 + s^2 \right). \]

In Subsection 5.1, to show that $E_A^1$ has rank at least 2, we showed that 
the entries in the first column of Table \ref{table2} are distinct elements of 
$\alpha(E_A^1(\mathbb{Q}))$. Now we have 32 known elements in 
$\alpha(E_A^2(\mathbb{Q}))$. One can proceed as we did before but the 
process will be quite tedious. To prove that the subfamily $E_A^2$ indeed has 
rank 3, we will use the Silverman's specialization theorem 
(cf. \cite{silverman}, Theorem 20.3, C.20). As an overview, the specialization 
theorem requires $m$ points of $E_n$, 
say $P_1, \hdots, P_m$, over a function field, in our case $\mathbb{Q}(t)$. 
If we choose a specific value of $t \in \mathbb{Q}$ so that the resulting 
points $P_1, \hdots, P_m$ over $\mathbb{Q}$ are of infinite order and are 
linearly independent, then the specialization theorem tells us that this 
property extends for all but finitely many $t \in \mathbb{Q}$. Consequently, 
$E_A^2$ has rank at least $m$.

\vspace*{0.05in}

In view of Proposition \ref{eltstopts}, we can extract the following points in 
$E_A^2(\mathbb{Q})$: 
\begin{align*} 
	P = (x_1, y_1), &\text{ since } v(u + v) \in \alpha(E_n^2(\mathbb{Q})),\\ 
	Q = (x_2, y_2), &\text{ since } \left( 8r_1^2 - s^2 \right) \left( 16r_1^2 + s^2 \right) \in 
\alpha(E_n^2(\mathbb{Q})), \text{ and}\\
	R = \psi(S) = (x_3^*, y_3^*), &\text{ since } 1 \in \overline{\alpha}(\overline{E_n^2}(\mathbb{Q})) \text{ (here, $S = (x_3, y_3) \in \overline{E_n^2}(\mathbb{Q})$)}
\end{align*}
with
\begin{align*}
x_1 &= 393216r^8 + 36864r^6s^2 - 48r^2s^6\\
y_1 &= 150994944r^{11}s + 9437184r^9s^3 - 442368r^7s^5 - 18432r^5s^7 + 576r^3s^9\\
x_2 &=  294912r^8 - 18432r^6s^2 - 2304r^4s^4\\
y_2 &= \left( 4718592r^{10} - 884736r^8s^2 + 4608r^4s^6 \right)y\\
x_3 &=  589824r^8 - 147456r^6s^2 + 9216r^4s^4\\
y_3 &= 754974720r^{12} - 207618048r^{10}s^2 + 17694720r^8s^4 - 589824r^6s^6 + 18432r^4s^8\\
x_3^* &= 409600r^8 - 20480r^6s^2 + 1536r^4s^4 - 32r^2s^6 + s^8\\
y_3^* &= -73400320r^{12} - 32243712r^{10}s^2 + 2703360r^8s^4\\
	&\ \ \,\; - 81920r^6s^6 + 1920r^4s^8 + 48r^2s^{10} - s^{12}.
\end{align*}
Notice that the $x$-coordinates of $P$, $Q$, and $R$ have degree 8 
while the $y$-coordinates have degree 12. They satisfy the equation 
$y^2 = x^3 - A^2 x$ with 
\[ A = -96r_1^2 \left( 8r_1^2 - s^2 \right)\left( 16r_1^2 + s^2 \right)\left( 32r_1^2 - s^2 \right), \] 
and both sides show a polynomial in $\mathbb{Z}[r_1, s]$ of degree 24. 
To move to $\mathbb{Q}(t)$, we can simply divide both sides by $s^{24}$ 
and make the appropriate change of coordinates: 
$x \mapsto \tfrac{x}{s^8}$, $y \mapsto \tfrac{y}{12}$, 
$A \mapsto \tfrac{A}{s^8}$ then put $t = \tfrac{r_1}{s}$. 
This results to a new elliptic curve 
\[ E_A^3\,/\,\mathbb{Q}: y^2 = x \left[ x^2 -  9216 t^4 \left( 8t^2 - 1 \right)^2 \left( 16t^2 + 1 \right)^2 \left( 32t^2 - 1 \right)^2 \right] \]
which is birationally equivalent to $E_A^2\,/\,\mathbb{Q}$. Moreover, the 
three points from above transforms into: 
\begin{align*}
	P &= \left( 393216t^8 + 36864t^6 - 48t^2, 150994944t^{11} + 9437184t^9\right.\\
		&\ \ \ \,\left. - 442368t^7 - 18432t^5 + 576t^3 \right),\\
	Q &= \left( 294912t^8 - 18432t^6 - 2304t^4, \left( 4718592t^{10} - 884736t^8 + 4608t^4 \right)y_1 \right), \text{ and}\\
	R &= \left( 409600t^8 - 20480t^6 + 1536t^4 - 32t^2 + 1, \right.\\
&\ \ \ \ \left. -73400320t^{12} - 32243712t^{10} + 2703360t^8 - 81920t^6 + 1920t^4 + 48t^2 - 1 \right),
\end{align*}
which all lie in $E_A^3(\mathbb{Q})$.

\begin{lemma}
Let $E_A^3\,/\,\mathbb{Q}$ be the elliptic curve discussed above. Choose 
$t \in \mathbb{Q}$ so that there exists a $y_1 \in \mathbb{Q}$ satisfying 
$y_1^2 = -896t^4 - 40t^2 + 1$. Then $E_A^3$ has rank at least 3 over 
$\mathbb{Q}$. 
\end{lemma}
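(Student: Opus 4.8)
The plan is to prove $\rank(E_A^3)\ge 3$ for each admissible $t$ individually, using the \emph{exact} rank formula \eqref{rankformula} of Proposition \ref{sec3prop1} rather than Silverman's specialization theorem. Since \eqref{rankformula} is an exact identity attached to a single curve, any lower bound it produces holds for the curve in question, so a bound established for every admissible $(r_1,s)$ yields the conclusion for every admissible $t$, with no exceptional set. Because $E_A^3$ is birationally equivalent to $E_A^2$ (with $A=-96r_1^2(8r_1^2-s^2)(16r_1^2+s^2)(32r_1^2-s^2)$ and $t=r_1/s$), it suffices to show $\rank(E_A^2)\ge 3$ whenever $(16r_1^2+s^2)(s^2-56r_1^2)$ is a rational square. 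By \eqref{rankformula} it is enough to exhibit $32$ distinct elements of $\alpha(E_A^2(\mathbb{Q}))$, since then $2^{\rank(E_A^2)+2}=\#\alpha(E_A^2(\mathbb{Q}))\cdot\#\overline{\alpha}(\overline{E_A^2}(\mathbb{Q}))\ge 32\cdot 1$, forcing $\rank\ge 3$.

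First I would organize the $32$ candidates of Table \ref{table2} as $H\cup gH$, where $H$ is the order-$16$ subgroup generated modulo squares by $\pm 1,\pm A,\pm uv,\pm(u^2-v^2),\pm u(u\pm v),\pm v(u\pm v)$, with $u=16r_1^2+s^2$ and $v=32r_1^2-s^2$, and where $g=(8r_1^2-s^2)(16r_1^2+s^2)$ is the new element furnished by the solvable quartic \eqref{subsec5.2}. Since $\alpha(E_A^2(\mathbb{Q}))$ is a multiplicative group (Remark \ref{sec3rem1}) containing both $H$ and $g$, it contains the full coset $gH$; hence $\#\alpha(E_A^2(\mathbb{Q}))\ge 32$ precisely when (A) $H$ genuinely has order $16$, and (B) $g\notin H$ modulo squares. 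Step (B) unwinds into $16$ non-square conditions, one for each $h\in H$ written through $u$ and $v$.

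The guiding principle for both (A) and (B) is to reduce every potential coincidence to an obstruction that is \emph{uniform} in $(r_1,s)$, so that no exceptional values survive. Two such tools are available. The first is Proposition \ref{sec4prop4}, which forbids $uv(u^2-v^2)$ from being $\pm$ a square with no exceptions whatsoever, so any coincidence forcing $A=\pm\square$ is impossible for every $(r_1,s)$. The second is elementary congruences modulo $3$, $4$, and $8$, which, exactly as in Lemmas \ref{lem1const2}--\ref{lem4const2}, annihilate the coincidences that reduce to a fixed quadratic form representing $0$ only when $3$ or $2$ divides $\gcd(r_1,s)$. Carrying this out, each of the $16$ conditions in (B) and the pairwise distinctnesses in (A) should collapse either to Proposition \ref{sec4prop4} or to one of these congruences, and these then hold for all admissible pairs simultaneously.

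The main obstacle is that a few of the coincidences in (A) are precisely those that in Subsection 5.1 required the rank-$0$ Jacobian quartics (e.g.\ $uv\equiv\pm 1$ or $v(u-v)\equiv 1\ (\bmod\ (\mathbb{Q}^{\times})^2)$), whose failure was established only for all but finitely many $(r,s)$. To secure the statement for \emph{every} admissible $t$ I would feed the admissibility relation $(16r_1^2+s^2)(s^2-56r_1^2)=\square$ back into each borderline condition: substituting it should convert the offending square-coincidence into one that either contradicts Proposition \ref{sec4prop4} or acquires a congruence obstruction, thereby eliminating the finite exceptional set inherited from Subsection 5.1. Any finitely many pairs $(r_1,s)$ still not covered — necessarily of bounded, effectively computable height, including the degenerate specializations $r_1=0$ and $s=0$ — I would then dispatch one at a time, either exhibiting three independent points directly via Proposition \ref{eltstopts} or verifying that they fail admissibility, so that $\rank(E_A^3)\ge 3$ holds for every admissible $t$ without exception.
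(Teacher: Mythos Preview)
Your approach is genuinely different from the paper's. The paper does not attempt to verify that the $32$ classes in Table \ref{table2} are pairwise distinct; instead it extracts three explicit points $P,Q,R\in E_A^3(\mathbb{Q}(t))$ from the solvable quartics, specializes at $t=\tfrac{2}{15}$, checks with SAGE that the height-pairing determinant of the specialized points is nonzero, and invokes Silverman's specialization theorem. This yields rank $\ge 3$ only for all but finitely many admissible $t$, which is all the paper actually claims once the proof is read. Your plan aims at the stronger conclusion ``every admissible $t$,'' and would, if it worked, buy a cleaner statement with no exceptional set; the paper's route buys a short proof that offloads the verification to a single numerical computation.

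That said, your proposal has a real gap at step (B). Showing $g=(8r_1^2-s^2)(16r_1^2+s^2)\notin H$ amounts to sixteen separate non-square conditions, none of which you actually analyze. Several of these do \emph{not} reduce to Proposition \ref{sec4prop4} or to congruence obstructions mod $3,4,8$; they produce genuinely new quartic curves in $(r_1,s)$ whose Jacobians you have not computed, and there is no reason to expect them all to have rank $0$. Your hope that the admissibility relation $(16r_1^2+s^2)(s^2-56r_1^2)=\square$ will ``convert the offending square-coincidence'' into something tractable is asserted but not demonstrated for a single case, and combining two quartic constraints generically lands you on a curve of genus $>1$ where Faltings gives finiteness but nothing effective. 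Likewise, the fallback of dispatching leftover pairs ``one at a time'' presupposes you can enumerate them, which you cannot without first carrying out the preceding analysis. As written, the proposal is a reasonable programme but not a proof; the paper's specialization argument, by contrast, is complete modulo the SAGE verification.
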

\begin{proof}
Suppose $P$, $Q$, and $R$ are independent points in $E_A^3(\mathbb{Q}(t))$, 
of infinite order. Note that we can make this assumption about $P$, $Q$, and $R$ 
over $\mathbb{Q}(t)$ otherwise, there will exist no specialization of $t$ for which 
the points $P$ and $Q$ are linearly independent points of infinite order over 
$\mathbb{Q}$ to begin with. 

As $\left( \tfrac{2}{15}, \tfrac{17}{225} \right)$ is a point on the quartic, 
we can specialize at $t = \tfrac{2}{15}$. So we get the elliptic curve
\[ E : y^2 = x \left[ x^2 - \left( \tfrac{692527232}{854296875} \right)^2 \right], \]
which has the points:
\begin{align*}
	P &= \left( -\tfrac{518498368}{854296875}, \tfrac{402354733568}{961083984375} \right),\\
	Q &= \left( -\tfrac{228462592}{284765625}, \tfrac{1499171528704}{14416259765625} \right), \text{ and }\\
	R &= \left( \tfrac{86359849}{102515625}, \tfrac{5457760750771}{25949267578125} \right).
\end{align*}

Using SAGE, we can verify that $P$, $Q$, and $R$ are points of infinite order 
in $E(\mathbb{Q})$. All that is left is to check is if these three points are 
linearly independent. Again using SAGE, we compute the determinant of the 
height-pairing matrix of the points $P$, $Q$, and $R$. The determinant is 
approximately 473.22. It being nonzero indicates the linear independence of 
the points $P$, $Q$, and $R$. Finally, we use the Specialization Theorem to 
extend this result for all but finitely many values  $t \in \mathbb{Q}$.
\end{proof}

Since $E_A^2$ and $E_A^3$ are birationally equivalent elliptic curves over 
$\mathbb{Q}$, it follows that $\rank(E_A^2(\mathbb{Q})) \geq 3$. So we 
get another subfamily of $E_A$ with rank at least 3.


\section{Conclusion}

By the end of Section 4, we have already established the truth of the theorem. 

\mainresult*

\noindent The above result adds to the long list of partial answers given about 
the millenial-old congruent number problem. 
\begin{figure}[H]\label{fig1}
\begin{tikzpicture}
\node[anchor = north] (A) at (0,0) {\fbox{$n$ is a congruent number}}; 
\node[anchor = north] (B) at (8, 2) {\fbox{$nm^2 = uv \left( u^2 - v^2 \right)$ has a solution over $\mathbb{Z}$}};
\node[anchor = north] (C) at (8, 1) {\fbox{\eqref{eq4.1} has a solution over $\mathbb{Z}$}};
\node[anchor = north] (D) at (8,0) {\fbox{$\rank(E_n) \geq 1$}}; 
\node[anchor = north] (E) at (8, -1) {\fbox{\eqref{eq1} has a solution over $\mathbb{Q}$}};
\node[anchor = north] (F) at (8, -2) {\fbox{$2A_n = B_n$ if $n$ is odd; $2C_n = D_n$ if $n$ is even}};
\draw[<->, ,>=stealth] (A) -- (B);
\draw[<->, ,>=stealth] (A) -- (C);
\draw[<->, ,>=stealth] (A) -- (D);
\draw[<->, ,>=stealth] (A) -- (E);
\draw[->, ,>=stealth] (A) -- (F);
\end{tikzpicture}
\caption{Partial results for the congruent number problem}
\end{figure}
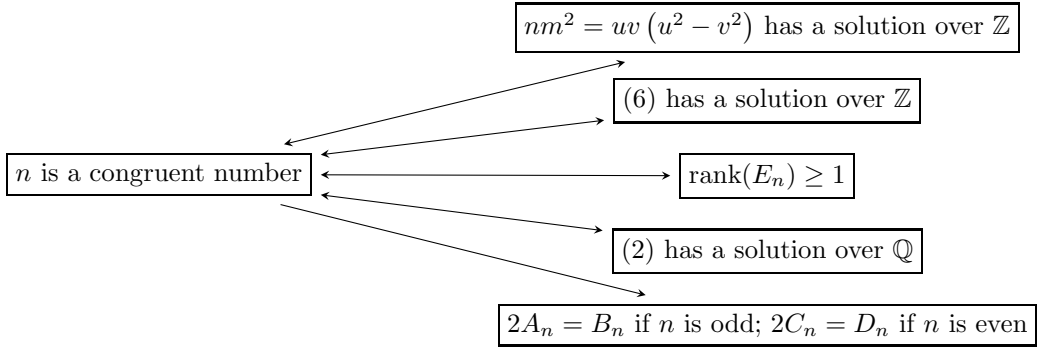

One particular problem that Theorem \ref{sec4thm1} gives a complete answer to 
is the question of finding an expression that generates the set of all congruent 
numbers if it exists. Theorem \ref{sec4thm1} answers in the affirmative and says 
that up to squares, the expression we are looking for is $uv\left( u^2 - v^2 \right)$, 
where $u$ and $v$ are integers. We also showed that a proof of Theorem 
\ref{sec4thm1} with 2-descent (see Proposition \ref{sec5prop1} and Proposition 
\ref{sec5prop2}) serves as a stepping stone for the construction of congruent 
number elliptic curves of rank higher than 1. 

The result of this paper targets specific questions about congruent numbers and 
elliptic curves. However, like all partial answers, it has its limitations. One particular 
weakness of this result is that the solvability of the equation 
$nm^2 = uv \left( u^2 - v^2 \right)$ over $\mathbb{Z}$ is not trivial. So for 
checking if a specific $n > 0$ is a congruent number, Theorem \ref{sec4thm1} may 
not be the best choice.


\end{document}